\documentclass[a4paper, 11pt]{article}

\usepackage{fullpage}

\usepackage[english]{babel}
\usepackage[utf8]{inputenc}
\usepackage{amsmath, amsfonts, amssymb, amsthm}
\usepackage{mathtools}
\usepackage{bm}
\usepackage{authblk}
\usepackage{xfrac}
\usepackage[symbol]{footmisc}

\usepackage{booktabs}
\usepackage[colorlinks=true, linkcolor=blue, citecolor=red]{hyperref}
\usepackage{enumitem}
\usepackage{graphicx}
\usepackage{subcaption}

\usepackage[linesnumbered,ruled,vlined]{algorithm2e}
\SetKwInput{KwInput}{input}
\SetAlFnt{\small}

\setcounter{MaxMatrixCols}{20}
%sorting=nyt,
\usepackage[backend=biber, style=nature, sortcites=ynt, terseinits=true, maxbibnames=4, doi=true]{biblatex}

\DeclareFieldFormat{pages}{\mkcomprange{#1}}
\setcounter{mincomprange}{10}
\setlength{\biblabelsep}{0.5\labelsep}

\addbibresource{common/bib.bib}

%%%%%%%%%%%%% PMLR %%%%%%%%%%%%%%%%%%%%%%

\DeclareFieldFormat{pmlr}{%
  \mkbibacro{PMLR}\addcolon\space
  \ifhyperref
    {\href{https://proceedings.mlr.press/#1}{\nolinkurl{#1}}}
    {\nolinkurl{#1}}}

\DeclareFieldAlias{eprint:pmlr}{pmlr}
\DeclareFieldAlias{eprint:PMLR}{eprint:pmlr}

%%%%%%%%%%%%% HAL %%%%%%%%%%%%%%%%%%%%%%

\DeclareFieldFormat{hal}{%
  \mkbibacro{HAL}\addcolon\space
  \ifhyperref
    {\href{https://hal.archives-ouvertes.fr/#1}{\nolinkurl{#1}}}
    {\nolinkurl{#1}}}

\DeclareFieldAlias{eprint:hal}{hal}
\DeclareFieldAlias{eprint:HAL}{eprint:hal}

\renewbibmacro*{eprint}{%
  \printfield{pmlr}%
  \printfield{hal}%
  \newunit\newblock
  \iffieldundef{eprinttype}
    {\printfield{eprint}}
    {\printfield[eprint:\strfield{eprinttype}]{eprint}}
}

\newcommand{\Real}{\mathbb{R}}
\newcommand{\N}{\mathbb{N}}

\newcommand{\set}[3][]{#1\{ #2~:~#3 #1\}}

\newcommand{\Proj}[3][]{\Pi_{#2} #1( #3 #1)}

\newcommand{\norm}[2][]{#1\| #2 #1\|}
\newcommand{\Norm}[3][]{#1\| #2 #1\|_{\mathrm{#3}}}

\newcommand{\rank}[2][]{\mathrm{rank} #1( #2 #1)}

\newcommand{\Expectation}{\mathbb{E}}
\newcommand{\Probability}{\Pr}

\newcommand{\Prob}[2][]{\Probability #1\{ #2 #1\}}

\newtheorem{theorem}{Theorem}[section]
\newtheorem{lemma}[theorem]{Lemma}
\newtheorem{corollary}[theorem]{Corollary}

\newtheorem{remark}[theorem]{Remark}

\title{On the distance to low-rank matrices in the maximum norm\footnotetext{This research was funded by the Austrian Science Fund (FWF) project \href{https://doi.org/10.55776/F65}{10.55776/F65}.}}
\author{Stanislav Budzinskiy\footnote{Faculty of Mathematics, University of Vienna, Kolingasse 14-16, 1090, Vienna, Austria (\href{mailto:stanislav.budzinskiy@univie.ac.at}{stanislav.budzinskiy@univie.ac.at})}}
\date{}

\begin{document}
\maketitle

\begin{abstract}
Every sufficiently big matrix with small spectral norm has a nearby low-rank matrix if the distance is measured in the maximum norm (Udell \& Townsend, SIAM J Math Data Sci, 2019). We use the Hanson--Wright inequality to improve the estimate of the distance for matrices with incoherent column and row spaces. In numerical experiments with several classes of matrices we study how well the theoretical upper bound describes the approximation errors achieved with the method of alternating projections.
\end{abstract}

{\bf Keywords:} low-rank approximation, maximum norm, Johnson--Lindenstrauss lemma,\\ Hanson--Wright inequality, alternating projections\\

{\bf MSC2020:} 15A23, 60F10, 65F55

% \tableofcontents
% \clearpage

\section{Introduction}

To check whether a matrix $X \in \Real^{m \times n}$ admits \textit{good} low-rank approximation, we typically look at its singular values $\sigma_1, \ldots, \sigma_{\min\{m,n\}}$. The theorems of Schmidt \cite{schmidt1907theorie}, Eckart-Young \cite{eckart1936approximation} and Mirsky \cite{mirsky1960symmetric} state that for every unitarily invariant norm $\Norm{\cdot}{\circ}$ and every $r < \rank{X}$ the minimum of the functional $\Norm{X - Y}{\circ}$ over all matrices $Y$ with $\rank{Y} = r$ equals 
\begin{equation*}
    \Norm[\big]{\mathrm{diag}\big(0, \ldots, 0, \sigma_{r+1}, \ldots, \sigma_{\min\{m,n\}}\big)}{\circ}
\end{equation*}
and is achieved at the truncated singular value decomposition of $X$. Recall that a matrix norm is called unitarily invariant if $\Norm{Q_m A Q_n}{\circ} = \Norm{A}{\circ}$ for each $A \in \Real^{m \times n}$ and all orthogonal matrices $Q_m \in \Real^{m \times m}$ and $Q_n \in \Real^{n \times n}$. This class includes the widely used spectral $\Norm{\cdot}{2}$ and Frobenius $\Norm{\cdot}{F}$ norms, but there are other important norms that are not unitarily invariant and, therefore, are not covered by the above theorems. We focus on the \textit{maximum} norm
\begin{equation*}
    \Norm{X}{\max} = \max_{i,j} |X(i, j)|,
\end{equation*}
which is not unitarily invariant as a simple example shows:
\begin{equation*}
    \Norm[\Bigg]{\begin{bmatrix}
        \sfrac{1}{\sqrt{2}} & -\sfrac{1}{\sqrt{2}} \\
        \sfrac{1}{\sqrt{2}} & \sfrac{1}{\sqrt{2}}
    \end{bmatrix}}{\max} \neq 
    \Norm[\Bigg]{\begin{bmatrix}
        1 & 0 \\
        0 & 1
    \end{bmatrix}}{\max}.
\end{equation*}
Just as for any other norm, there is an ultimate upper bound on the approximation error that is achieved if we take a vanishing sequence of rank-$r$ matrices:
\begin{equation}
\label{eq:ultimate_bound}
    d_r(X) = \inf \set[\big]{\Norm{X - Y}{\max}}{\rank{Y} = r} \leq \Norm{X}{\max}.
\end{equation}

The ideas of \textit{cross approximation} \cite{goreinov2001maximal, osinsky2018pseudo} lead to the following bound
\begin{equation}
\label{eq:cross_bound}
    d_r(X) \leq \sqrt{1 + \frac{r}{m - r + 1}} \sqrt{1 + \frac{r}{n - r + 1}} \cdot \sigma_{r+1},
\end{equation}
which can be simplified to $d_r(X) \leq 2 \sigma_{r+1}$ when $m, n \geq 2r-1$. This result is similar in flavor to the case of the spectral norm but, unlike it, can become trivial. Indeed, the \textit{spikiness} of $X$,
\begin{equation*}
    \gamma_X = \sqrt{mn} \Norm{X}{\max} / \Norm{X}{2}, \quad 1 \leq \gamma_X \leq \sqrt{mn},
\end{equation*}
allows us to rewrite the above estimate as
\begin{equation*}
    d_r(X) \leq \frac{2\sqrt{mn}}{\gamma_X} \frac{\sigma_{r+1}}{\Norm{X}{2}} \Norm{X}{\max}
\end{equation*}
and note that the right hand side exceeds $\Norm{X}{\max}$ if $\sigma_{r+1} > \frac{1}{2} \Norm{X}{2}$. Therefore, the upper bound \eqref{eq:cross_bound} guarantees good approximation in the maximum norm when the singular values decay rapidly but ceases to be restrictive when the spectrum is flat.

A completely different route was taken in \cite{alon2013approximate}, where the authors showed that $d_r(X)$ can be small regardless of the singular values. Namely, they proved the following.

\begin{theorem}[{\cite[Theorem~1.4]{alon2013approximate}}]
\label{theorem:spd_max_appr}
Let $X \in \Real^{n \times n}$ be symmetric positive semidefinite. For $\varepsilon \in (0,1)$, there exists a matrix $Y \in \Real^{n \times n}$ such that
\begin{equation*}
    \Norm{X - Y}{\max} \leq \varepsilon \Norm{X}{max}, \quad \rank{Y} \leq \frac{9 \log(n)}{\varepsilon^2 - \varepsilon^3}.
\end{equation*}
\end{theorem}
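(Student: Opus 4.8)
The plan is to reduce the statement to the Johnson--Lindenstrauss lemma by exploiting the Gram structure of a positive semidefinite matrix. Since $X$ is symmetric positive semidefinite, factor $X = G^\top G$ with $G = [g_1, \dots, g_n] \in \Real^{r_0 \times n}$, where $r_0 = \rank{X} \le n$, so that $X(i,j) = \dotp{g_i}{g_j}$. Positive semidefiniteness is used right away: by Cauchy--Schwarz $|X(i,j)| \le \norm{g_i}\,\norm{g_j} \le \max_\ell \norm{g_\ell}^2$, and since $X(\ell,\ell) = \norm{g_\ell}^2$ this maximum is attained, so $\Norm{X}{max} = \max_\ell \norm{g_\ell}^2$. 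The same identity yields $\norm{g_i}^2 + \norm{g_j}^2 \le 2\Norm{X}{max}$ for every pair $(i,j)$; this is the device that will convert a \emph{multiplicative} embedding error into the \emph{additive} error the theorem asks for.

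Next, let $P \in \Real^{k \times r_0}$ be a random matrix with independent $N(0, 1/k)$ entries, and set $Y = (PG)^\top(PG)$, so that $Y(i,j) = \dotp{Pg_i}{Pg_j}$ and $\rank{Y} \le k$. It suffices to choose $k$ of order $\log(n)/\varepsilon^2$ so that, with positive probability, $|Y(i,j) - X(i,j)| \le \varepsilon\Norm{X}{max}$ for all $i,j$ simultaneously; any $Y$ attaining this event proves the claim. (If the target rank bound already exceeds $r_0$, just take $Y = X$.)

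For the entrywise bound I would use the polarization identity
\[
    Y(i,j) - X(i,j) = \tfrac14\big[\big(\norm{P(g_i+g_j)}^2 - \norm{g_i+g_j}^2\big) - \big(\norm{P(g_i-g_j)}^2 - \norm{g_i-g_j}^2\big)\big].
\]
Apply the distributional Johnson--Lindenstrauss lemma to the finite family $\{\,g_i + g_j : i\le j\,\} \cup \{\,g_i - g_j : i<j\,\}$, which has exactly $n^2$ members (note $g_\ell + g_\ell = 2g_\ell$ takes care of the diagonal entries of $Y$): for a suitable $k$, with positive probability $P$ preserves each of their squared Euclidean norms up to a factor in $[1-\varepsilon, 1+\varepsilon]$. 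On that event, the parallelogram law $\norm{g_i+g_j}^2 + \norm{g_i-g_j}^2 = 2(\norm{g_i}^2 + \norm{g_j}^2)$ bounds the right-hand side above by $\tfrac{\varepsilon}{2}(\norm{g_i}^2 + \norm{g_j}^2) \le \varepsilon\Norm{X}{max}$, and likewise $|Y(\ell,\ell) - X(\ell,\ell)| = |\,\norm{Pg_\ell}^2 - \norm{g_\ell}^2\,| \le \varepsilon\norm{g_\ell}^2 \le \varepsilon\Norm{X}{max}$, which is exactly what is needed.

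There is no genuine obstacle here; the argument is a routine application of random projection, and the only point requiring care is the constant, i.e.\ verifying that $k \le 9\log(n)/(\varepsilon^2 - \varepsilon^3)$ actually suffices. This uses the sharp quantitative form of the lemma, in which a single unit vector is distorted by more than $\varepsilon$ with probability at most $2\exp\!\big(-k(\tfrac{\varepsilon^2}{2} - \tfrac{\varepsilon^3}{3})\big)$, followed by a union bound over the $n^2$ auxiliary vectors: this forces $k > \log(2n^2)/(\tfrac{\varepsilon^2}{2} - \tfrac{\varepsilon^3}{3})$, and a short computation (valid for $n \ge 2$, $\varepsilon \in (0,1)$, and absorbing the ceiling) shows this is dominated by $9\log(n)/(\varepsilon^2 - \varepsilon^3)$. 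A variant that avoids polarization altogether estimates $g_i^\top(P^\top P - \Id)g_j$ directly through a Hanson--Wright tail bound and unions over the $n^2$ entries of $X$, which is closer in spirit to the refinement developed in the rest of the paper.
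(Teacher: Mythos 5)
The paper does not actually prove this statement: Theorem~\ref{theorem:spd_max_appr} is imported verbatim from Alon et al.\ and is only used as motivation, so there is no in-paper proof to compare yours against. Your argument is the standard (and, as far as I can tell, essentially the original) Johnson--Lindenstrauss proof of this fact, and its structure is sound: the factorization $X = G^TG$, the observation that positive semidefiniteness forces $\Norm{X}{max} = \max_\ell \norm{g_\ell}^2$ to sit on the diagonal, and the polarization/parallelogram step that converts the multiplicative JL distortion on the $n^2$ vectors $g_i \pm g_j$ into the additive bound $\tfrac{\varepsilon}{2}(\norm{g_i}^2 + \norm{g_j}^2) \leq \varepsilon \Norm{X}{max}$ are all correct. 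This last step is precisely what distinguishes the PSD case from the general one: in the proof of Theorem~\ref{theorem:our_max_appr} the analogous quantity $\norm{\Tilde{u}_i}^2 + \norm{\Tilde{v}_j}^2$ can only be controlled by $\Norm{X}{2}$ (via the coherences), which is why the nonsymmetric theorems have $\Norm{X}{2}$ on the right-hand side.

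One quantitative slip: the standard Gaussian JL tail is $2\exp\big({-\tfrac{k}{2}(\varepsilon^2/2 - \varepsilon^3/3)}\big)$ per vector, not $2\exp\big({-k(\varepsilon^2/2 - \varepsilon^3/3)}\big)$ --- this is what makes the constant $4$ appear in Theorem~\ref{theorem:JL} after a union bound over $\binom{m}{2}$ pairs. With the correct exponent, your union bound over $n^2$ vectors requires roughly $k > (8\log n + 4\log 2)/(\varepsilon^2 - \varepsilon^3)$, which is dominated by $9\log(n)/(\varepsilon^2-\varepsilon^3)$ only for fairly large $n$ (around $n \geq 44$), not for all $n \geq 2$ as you claim. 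This is not fatal: since $\varepsilon^2 - \varepsilon^3 \leq 4/27$, the stated rank bound exceeds $60\log n$, hence exceeds $n \geq \rank{X}$ for all $n$ up to a few hundred, and there your own fallback $Y = X$ applies; for larger $n$ the union bound closes. You should make that case split explicit rather than attributing the constant to a ``short computation valid for $n \geq 2$''.
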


\noindent For example, if $X$ is a multiple of the $n \times n$ identity matrix for sufficiently large $n$ then
\begin{equation*}
    d_{r}(X) \leq \varepsilon \Norm{X}{\max}, \quad r = \rank{Y},
\end{equation*}
while the best rank-$r$ approximation errors in the spectral and Frobenius norms are equal to $\Norm{X}{2}$ and $\sqrt{1 - \frac{r}{n}} \Norm{X}{F}$. To achieve the same relative approximation error $\varepsilon$ in the Frobenius norm, the rank must be at least $n(1 - \varepsilon^2)$, which scales linearly with $n$. For the spectral norm, the relative approximation error is always equal to one. At the same time, the rank scales as $\log(n)$ for the maximum norm---a truly low-rank approximation when $n$ is extremely large.

Theorem~\ref{theorem:spd_max_appr} was extended to nonsymmetric matrices in {\cite{udell2019big}}, though the maximum norm in the right hand side had to be replaced with the spectral norm.

\begin{theorem}[{\cite[Theorem~1.0]{udell2019big}}]
\label{theorem:ut_max_appr}
Let $X \in \Real^{m \times n}$. For $\varepsilon \in (0,1)$, there exists a matrix $Y \in \Real^{m \times n}$ such that
\begin{equation*}
    \Norm{X - Y}{\max} \leq \varepsilon \Norm{X}{2}, \quad \rank{Y} \leq \Big\lceil 72 \frac{\log(2 \min\{m,n\}+1)}{\varepsilon^2} \Big\rceil.
\end{equation*}
\end{theorem}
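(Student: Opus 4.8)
The plan is to reduce Theorem~\ref{theorem:ut_max_appr} to the symmetric case already handled by Theorem~\ref{theorem:spd_max_appr}, via a symmetrization (dilation) trick. Given $X \in \Real^{m \times n}$, assume without loss of generality that $m \leq n$ (otherwise transpose), scale so that $\Norm{X}{2} = 1$, and form the symmetric matrix
\begin{equation*}
    Z = \begin{bmatrix} I_m & X \\ X^{\mathsf T} & X^{\mathsf T} X \end{bmatrix} = \begin{bmatrix} I_m \\ X^{\mathsf T} \end{bmatrix} \begin{bmatrix} I_m & X \end{bmatrix} \in \Real^{(m+n) \times (m+n)},
\end{equation*}
which is positive semidefinite by construction. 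Its entries are bounded by $\max\{1, \Norm{X}{\max}, \Norm{X^{\mathsf T} X}{\max}\}$, and since $\Norm{X}{2} = 1$ we have $\Norm{X}{\max} \leq 1$ and $\Norm{X^{\mathsf T} X}{\max} \leq \Norm{X^{\mathsf T} X}{2} \leq 1$, so $\Norm{Z}{\max} = 1$.

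Next I would apply Theorem~\ref{theorem:spd_max_appr} to $Z$ with the given $\varepsilon$: there is a matrix $W \in \Real^{(m+n) \times (m+n)}$ with $\Norm{Z - W}{\max} \leq \varepsilon \Norm{Z}{\max} = \varepsilon$ and $\rank{W} \leq 9\log(m+n)/(\varepsilon^2 - \varepsilon^3)$. The key observation is that the top-right $m \times n$ block of $Z$ is exactly $X$, so the corresponding block $Y$ of $W$ satisfies $\Norm{X - Y}{\max} \leq \Norm{Z - W}{\max} \leq \varepsilon = \varepsilon \Norm{X}{2}$, and $\rank{Y} \leq \rank{W}$ since a submatrix cannot have larger rank. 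Undoing the scaling gives the bound $\Norm{X - Y}{\max} \leq \varepsilon \Norm{X}{2}$ in general.

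The remaining work is purely to match the stated constants. The dimension inside the logarithm must be brought from $m+n$ down to $2\min\{m,n\}+1 = 2m+1$; this is where the roles of $I_m$ and $X^{\mathsf T}X$ matter — padding with the $m \times m$ identity (rather than the $n \times n$ one) keeps the relevant dimension at $2m+1$ rather than $2n+1$. One checks $9\log(2m+1)/(\varepsilon^2-\varepsilon^3) \leq 72\log(2m+1)/\varepsilon^2$, which holds since $1/(1-\varepsilon) \leq 8$ fails for $\varepsilon$ near $1$ — so in fact a slightly more careful argument is needed: either restrict to a convenient range of $\varepsilon$ and handle the rest by the trivial bound $\rank{Y} \le \min\{m,n\}$ together with $\min\{m,n\} \le$ the claimed bound, or re-examine the proof of Theorem~\ref{theorem:spd_max_appr} to extract a cleaner dependence. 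I expect this constant-chasing — reconciling the $9/(\varepsilon^2-\varepsilon^3)$ factor with $72/\varepsilon^2$ and the off-by-a-factor-of-two in the logarithm's argument — to be the main (if mild) obstacle; the structural dilation argument itself is routine.
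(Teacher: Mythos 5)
Your dilation argument is a genuinely different route from the one the paper takes. The paper obtains the error bound of Theorem~\ref{theorem:ut_max_appr} as a corollary of Theorem~\ref{theorem:our_max_appr} (see the remark following it): the Johnson--Lindenstrauss lemma, in the form of Corollary~\ref{corollary:JL}, is applied to the $m+n$ rows of $U\Sigma^{1/2}$ and $V\Sigma^{1/2}$, yielding $\Norm{X-Y}{\max}\le\varepsilon\Norm{X}{2}$ once $r\ge 108\log(m+n+1)/\varepsilon^2$. Your reduction to the positive semidefinite case via $Z=\left[\begin{smallmatrix} I_m & X\\ X^{\mathsf T} & X^{\mathsf T}X\end{smallmatrix}\right]$ is structurally sound: $Z$ is PSD, $\Norm{Z}{\max}=1$ after normalizing $\Norm{X}{2}=1$, and the off-diagonal block of the approximant inherits both the rank bound and the entrywise error. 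It delivers $\rank{Y}\le 9\log(m+n)/(\varepsilon^2-\varepsilon^3)$, which for $\varepsilon<11/12$ is in fact a \emph{better} constant than the paper's $108/\varepsilon^2$. What your route buys is a black-box use of Theorem~\ref{theorem:spd_max_appr}; what it loses is the coherence/spikiness refinement of Theorems~\ref{theorem:our_max_appr} and \ref{theorem:our2_max_appr}, since the identity block pins $\Norm{Z}{\max}$ at the level of $\Norm{X}{2}$ no matter how incoherent or non-spiky $X$ is.

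Two concrete problems with your constant-chasing, though. First, the claim that padding with $I_m$ rather than $I_n$ ``keeps the relevant dimension at $2m+1$'' is false: $Z$ is $(m+n)\times(m+n)$ whichever identity block you use, so the logarithm you inherit from Theorem~\ref{theorem:spd_max_appr} is $\log(m+n)$, and there is no way to shrink the dilation below dimension $m+n$. Second, for $\varepsilon$ close to $1$ the factor $1/(1-\varepsilon)$ blows up and your fallback $\rank{Y}\le\min\{m,n\}$ does not always close the gap (e.g.\ $m=n=10^6$, $\varepsilon=0.99$: the claimed bound is about $1066$, your dilation gives about $1.3\cdot 10^4$, and the trivial rank is $10^6$). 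Neither defect should trouble you much, however, because the paper itself asserts that the $2\min\{m,n\}+1$ and the constant $72$ in the cited statement are artifacts of an error in the original proof (the rows and columns of $\tilde U$ and $\tilde V$ were confused there), and the version it actually proves carries $\log(m+n+1)$ --- exactly the dependence your dilation produces. So you end up proving the same corrected statement as the paper, by a shorter argument, and you should simply state your theorem with $\log(m+n)$ and $9/(\varepsilon^2-\varepsilon^3)$ rather than trying to reconcile it with constants that no available proof supports.
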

\noindent When $m$ and $n$ are sufficiently big, this theorem gives an upper bound
\begin{equation*}
    d_{r}(X) \leq \frac{\varepsilon \sqrt{mn}}{\gamma_X} \Norm{X}{\max}, \quad r = \rank{Y},
\end{equation*}
that is restrictive for $\varepsilon < \gamma_X / \sqrt{mn}$ even when all singular values of $X$ are equal. For larger $\varepsilon$, the bound exceeds the ultimate upper bound \eqref{eq:ultimate_bound} and becomes trivial.

\subsection{Contributions and outline}
Theorem~\ref{theorem:ut_max_appr} is an interesting statement about low-rank approximation in non-unitarily-invariant norms, but its proof in \cite{udell2019big} contains a small mistake. We feel obliged to correct it and simultaneously refine the estimate for matrices that are not full rank and whose column and row spaces are incoherent; this is the purpose of Theorem~\ref{theorem:our_max_appr}.

Following \cite{udell2019big}, we rely on the Johnson--Lindenstrauss lemma in the proof of Theorem~\ref{theorem:our_max_appr}. We turn to the Hanson--Wright inequality to prove Theorem~\ref{theorem:our2_max_appr}: an estimate of the approximation error that is tighter than Theorem~\ref{theorem:our_max_appr}.

Both Theorem~\ref{theorem:our_max_appr} and Theorem~\ref{theorem:our2_max_appr} provide upper bounds for the distance $d_r(X)$ in the maximum norm from $X$ to the set of rank-$r$ matrices. How accurate are they? We address this question numerically with the method of alternating projections as developed in \cite[Section~7.5]{budzinskiy2023quasioptimal}. As far as we know, this is the first extensive numerical study of low-rank matrix approximation in the maximum norm.

The text is naturally divided into three parts. In Section~\ref{sec:jl}, we recall the Johnson--Lin\-den\-strauss lemma and prove Theorem~\ref{theorem:our_max_appr}. We introduce the Hanson--Wright inequality in Section~\ref{sec:hw} and prove Theorem~\ref{theorem:our2_max_appr} with its help. Finally, we present the method of alternating projections and discuss the results of our numerical experiments in Section~\ref{sec:numerical}.
\section{Proof based on the Johnson--Lindenstrauss lemma}
\label{sec:jl}

The lemma of Johnson and Lindenstrauss \cite{johnson1984extensions} states that every finite set in a high-dimensional Euclidean space can be mapped into a lower-dimensional Euclidean space so that the pairwise distances between the points in the set are approximately preserved. Below, we present a particular version of the lemma from \cite{dasgupta2003elementary}. The $\ell_2$-norm of a vector is written as $\norm{\cdot}$.

\begin{theorem}[{\cite[Theorem~2.1]{dasgupta2003elementary}}]
\label{theorem:JL}
Let $\varepsilon \in (0,1)$. For every finite set $\{ w_1, \ldots, w_m \} \subset \Real^k$ with
\begin{equation*}
    k \geq r \geq \frac{4 \log(m)}{\varepsilon^2 / 2 - \varepsilon^3 / 3} 
\end{equation*}
there exists a matrix $Q \in \Real^{k \times r}$ with orthonormal columns such that for all $i,j \in [m]$
\begin{equation*}
    (1 - \varepsilon) \norm[\big]{w_i - w_j}^2 \leq \frac{k}{r} \norm[\big]{Q^T (w_i - w_j)}^2 \leq (1 + \varepsilon) \norm[\big]{w_i - w_j}^2.
\end{equation*}
\end{theorem}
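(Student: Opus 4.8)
The plan is to run the classical probabilistic argument. Take $Q \in \Real^{k \times r}$ to consist of an orthonormal basis of a uniformly random $r$-dimensional subspace of $\Real^{k}$, and show that with positive probability it preserves every one of the $\binom{m}{2}$ pairwise squared distances up to the factor $1 \pm \varepsilon$. Fixing a pair $i \ne j$, setting $v = w_i - w_j$ and discarding the trivial case $v = 0$, homogeneity lets us rescale to the unit vector $u = v/\norm{v}$, so the object to control is $L := \tfrac{k}{r}\norm{Q^{T} u}^{2}$ and the required two-sided bound for the pair is exactly $1 - \varepsilon \le L \le 1 + \varepsilon$. The key distributional fact is that projecting a fixed unit vector onto a uniformly random $r$-dimensional subspace has the same law as projecting a uniformly random unit vector onto the fixed coordinate subspace $\Real^{r} \times \{0\}$, and a uniformly random unit vector of $\Real^{k}$ is $g/\norm{g}$ for $g \sim N(0, I_{k})$; hence $L$ has the law of $\tfrac{k}{r} \cdot (g_1^{2} + \cdots + g_r^{2})/(g_1^{2} + \cdots + g_k^{2})$ with $g_1, \ldots, g_k$ independent standard Gaussian. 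In particular $\Expectation[L] = 1$, and what remains is to bound the tails $\Prob{L \le 1 - \varepsilon}$ and $\Prob{L \ge 1 + \varepsilon}$.

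Each tail I would estimate by a Chernoff bound. Write $A = g_1^{2} + \cdots + g_r^{2}$ and $B = g_{r+1}^{2} + \cdots + g_k^{2}$, which are independent $\chi^{2}$ variables with $r$ and $k - r$ degrees of freedom; the event $\{L \le 1 - \varepsilon\}$ is then precisely $\{(1 - \varepsilon)(A + B) \ge \tfrac{k}{r} A\}$, i.e.\ the non-negativity of the deterministic linear combination $(1 - \varepsilon) B - (\tfrac{k}{r} - 1 + \varepsilon) A$. Exponentiating, using independence, inserting the moment generating function $\Expectation[e^{t g^{2}}] = (1 - 2t)^{-1/2}$ (valid for $t < \tfrac{1}{2}$), optimising over the free parameter, and applying $1 + x \le e^{x}$, one arrives at
\begin{equation*}
    \Prob{L \le 1 - \varepsilon} \le \big( (1 - \varepsilon)\, e^{\varepsilon} \big)^{r/2} = \exp\!\Big( \tfrac{r}{2} \big( \varepsilon + \log(1 - \varepsilon) \big) \Big),
\end{equation*}
and, by the symmetric computation, $\Prob{L \ge 1 + \varepsilon} \le \big( (1 + \varepsilon)\, e^{-\varepsilon} \big)^{r/2} = \exp\!\big( \tfrac{r}{2} ( \log(1 + \varepsilon) - \varepsilon ) \big)$.

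To finish, I would feed in the elementary inequalities $\log(1 - \varepsilon) \le -\varepsilon - \varepsilon^{2}/2$ and $\log(1 + \varepsilon) \le \varepsilon - \varepsilon^{2}/2 + \varepsilon^{3}/3$, which collapse both tail estimates into the single bound $\exp\!\big( -\tfrac{r}{2} (\varepsilon^{2}/2 - \varepsilon^{3}/3) \big)$ (the lower tail is in fact strictly smaller). A union bound over the $\binom{m}{2}$ pairs and the two tails then bounds the probability that the random $Q$ fails for some pair by $2 \binom{m}{2} \exp\!\big( -\tfrac{r}{2}(\varepsilon^{2}/2 - \varepsilon^{3}/3) \big) < m^{2} \exp\!\big( -\tfrac{r}{2}(\varepsilon^{2}/2 - \varepsilon^{3}/3) \big)$, which is strictly below $1$ once $r \ge 4 \log(m) / (\varepsilon^{2}/2 - \varepsilon^{3}/3)$; since $k \ge r$ ensures that $r$ orthonormal columns fit inside $\Real^{k}$, a matrix $Q$ with the stated property must exist.

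The step I expect to be the main obstacle is the Chernoff estimate for the ratio $L$: its numerator and denominator are not independent, so the moment generating function does not factor directly. The resolution is exactly the decomposition $g_1^{2} + \cdots + g_k^{2} = A + B$ into the two independent blocks together with the rewriting of each tail event as the non-negativity of a fixed linear combination of $A$ and $B$; this is what makes the one-parameter optimisation tractable and is also where the constant $4$ and the expression $\varepsilon^{2}/2 - \varepsilon^{3}/3$ in the hypothesis are born, via the two logarithmic inequalities above. The remaining steps — the reductions $v \ne 0$ and $\norm{v} = 1$, the union bound, and the algebra of the optimisation — are routine.
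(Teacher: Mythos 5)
Your argument is correct and is precisely the Dasgupta--Gupta proof that the paper cites for this theorem (random $r$-dimensional subspace, reduction to the squared norm of the projection of a random unit vector via rotational invariance, Chernoff bounds on the two independent $\chi^2$ blocks, the logarithmic inequalities producing $\varepsilon^2/2-\varepsilon^3/3$, and the union bound over $2\binom{m}{2}<m^2$ events). The paper itself gives no proof, deferring entirely to that reference, so there is nothing further to compare.
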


The Johnson--Lindenstrauss lemma plays a pivotal role in the proof of Theorem~\ref{theorem:ut_max_appr} in \cite{udell2019big}. The lemma is not applied directly, though; rather, it is used to show that the pairwise \textit{inner products} are approximately preserved together with the pairwise distances.

\begin{corollary}
\label{corollary:JL}
Let $\varepsilon \in (0,1)$. For every finite set $\{ w_1, \ldots, w_m \} \subset \Real^k$ with
\begin{equation*}
    k \geq r \geq \frac{4 \log(m+1)}{\varepsilon^2 / 2 - \varepsilon^3 / 3}
\end{equation*}
there exists a matrix $Q \in \Real^{k \times r}$ with orthonormal columns such that for all $i,j \in [m]$
\begin{equation*}
    \left| w_i^T w_j - \frac{k}{r} (Q^T w_i)^T (Q^T w_j) \right| \leq \varepsilon \left( \norm{w_i}^2 + \norm{w_j}^2 - w_i^T w_j \right).
\end{equation*}
\end{corollary}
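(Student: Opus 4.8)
The plan is to apply Theorem~\ref{theorem:JL} not to $\{w_1, \ldots, w_m\}$ itself but to the augmented set $\{w_1, \ldots, w_m, w_{m+1}\}$ with $w_{m+1} = 0$; the extra point is exactly what turns $\log(m)$ into $\log(m+1)$ in the hypothesis. This produces a matrix $Q \in \Real^{k \times r}$ with orthonormal columns such that the two-sided distortion bound holds for every pair drawn from the enlarged set. Taking one member of the pair to be the origin gives, for each $i \in [m]$, the bound $(1 - \varepsilon)\norm{w_i}^2 \leq \frac{k}{r}\norm{Q^T w_i}^2 \leq (1 + \varepsilon)\norm{w_i}^2$, that is,
\begin{equation*}
    \left| \norm{w_i}^2 - \frac{k}{r}\norm{Q^T w_i}^2 \right| \leq \varepsilon \norm{w_i}^2,
\end{equation*}
and of course the analogous estimate for each difference $w_i - w_j$.

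The second step is to pass from distances to inner products by polarization. Using the identity $w_i^T w_j = \frac{1}{2}\left( \norm{w_i}^2 + \norm{w_j}^2 - \norm{w_i - w_j}^2 \right)$, the same identity for $Q^T w_i$ and $Q^T w_j$ (legitimate since $Q^T$ is linear, so $Q^T w_i - Q^T w_j = Q^T(w_i - w_j)$), and subtracting after scaling by $\frac{k}{r}$, one gets
\begin{equation*}
    w_i^T w_j - \frac{k}{r}(Q^T w_i)^T (Q^T w_j) = \frac{1}{2}\left[ \left( \norm{w_i}^2 - \tfrac{k}{r}\norm{Q^T w_i}^2 \right) + \left( \norm{w_j}^2 - \tfrac{k}{r}\norm{Q^T w_j}^2 \right) - \left( \norm{w_i - w_j}^2 - \tfrac{k}{r}\norm{Q^T(w_i - w_j)}^2 \right) \right].
\end{equation*}
Applying the triangle inequality and the three distortion bounds from the first step yields
\begin{equation*}
    \left| w_i^T w_j - \frac{k}{r}(Q^T w_i)^T (Q^T w_j) \right| \leq \frac{\varepsilon}{2}\left( \norm{w_i}^2 + \norm{w_j}^2 + \norm{w_i - w_j}^2 \right).
\end{equation*}

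The last step is cosmetic: substituting $\norm{w_i - w_j}^2 = \norm{w_i}^2 + \norm{w_j}^2 - 2 w_i^T w_j$ turns the right-hand side into exactly $\varepsilon\left( \norm{w_i}^2 + \norm{w_j}^2 - w_i^T w_j \right)$, which is the claimed bound. I do not expect any genuine obstacle here; the only thing to get right is the bookkeeping—augmenting the point set with the origin so that the individual squared norms are themselves controlled (forcing $m+1$ rather than $m$ inside the logarithm)—followed by the routine polarization-plus-triangle-inequality computation.
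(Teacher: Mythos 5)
Your proof is correct and is essentially the argument the paper intends: the paper's one-line proof defers to the proof of Lemma~2.4 in the Udell--Townsend reference, which is exactly this augment-the-set-with-the-origin step (accounting for the $\log(m+1)$) followed by polarization and the triangle inequality. The bookkeeping in your final substitution checks out and reproduces the stated bound exactly.
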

\begin{proof}
    Follow the proof of \cite[Lemma~2.4]{udell2019big} and use Theorem~\ref{theorem:JL}.
\end{proof}

To formulate our Theorem~\ref{theorem:our_max_appr}, a refined version of Theorem~\ref{theorem:ut_max_appr}, we need to introduce the \textit{coherence of a subspace}. Let $L$ be a $k$-dimensional subspace of $\Real^m$ and $\Pi_L : \Real^{m} \to L$ be the corresponding orthogonal projection operator. The coherence of $L$ is defined as the largest squared norm of the canonical basis vectors projected onto $L$:
\begin{equation*}
    \mu_L = \frac{m}{k} \max_{i \in [m]} \norm{\Pi_L e_i}^2.
\end{equation*}
For every subspace $L$, its coherence $\mu_L$ lies in the closed interval $[1, m/k]$.

\begin{theorem}
\label{theorem:our_max_appr}
Assume that $m, n, k \in \N$ satisfy
\begin{equation*}
    \min\{m,n\} \geq k > k_0 = 108 \log(m + n + 1).
\end{equation*}
Let $\varepsilon \in (\sqrt{k_0 / k}, 1)$ and assume that $r \in \N$ satisfies $k > r \geq k_0 / \varepsilon^2$. Then for every matrix $X \in \Real^{m \times n}$ of rank $k$ there exists a matrix $Y \in \Real^{m \times n}$ of rank $r$ such that
\begin{equation*}
    \Norm{X - Y}{\max} \leq \frac{\varepsilon}{3} \left(\frac{k}{m} \mu_{\mathrm{col}} + \frac{k}{n} \mu_{\mathrm{row}} + \frac{\gamma_X}{\sqrt{mn}} \right) \Norm{X}{2},
\end{equation*}
where $\mu_{\mathrm{col}}$ and $\mu_{\mathrm{row}}$ are the coherences of the column and row spaces of $X$.
\end{theorem}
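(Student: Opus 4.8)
The plan is to mimic the proof of Theorem~\ref{theorem:ut_max_appr} from \cite{udell2019big}: express every entry of $X$ as a Euclidean inner product of two vectors in $\Real^{k}$, compress these vectors down to $\Real^{r}$ with a Johnson--Lindenstrauss map that approximately preserves inner products (Corollary~\ref{corollary:JL}), and take the compressed Gram matrix as the approximant $Y$. The only twist needed to bring in the coherences is to split the singular values evenly between the two sets of vectors, so that squared Euclidean norms turn into $\sigma_{1}$ times squared norms of singular-vector rows, i.e.\ into subspace coherences.

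In detail, write the compact singular value decomposition $X = U \Sigma V^{T}$ with $U \in \Real^{m \times k}$ and $V \in \Real^{n \times k}$ having orthonormal columns and $\Sigma = \mathrm{diag}(\sigma_{1}, \ldots, \sigma_{k})$, $\sigma_{1} = \Norm{X}{2}$. Let $u_{i} \in \Real^{k}$ and $v_{j} \in \Real^{k}$ denote the $i$-th row of $U$ and the $j$-th row of $V$, and set $a_{i} = \Sigma^{1/2} u_{i}$, $b_{j} = \Sigma^{1/2} v_{j}$, so that $X(i,j) = a_{i}^{T} b_{j}$. Apply Corollary~\ref{corollary:JL} to the $m + n$ vectors $a_{1}, \ldots, a_{m}, b_{1}, \ldots, b_{n} \in \Real^{k}$ with $\varepsilon/3$ in place of $\varepsilon$. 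Its hypothesis holds: $r < k$ by assumption, and using $(\varepsilon/3)^{2}/2 - (\varepsilon/3)^{3}/3 \geq 7\varepsilon^{2}/162$ for $\varepsilon \in (0,1)$ together with $r \geq k_{0}/\varepsilon^{2}$ and $k_{0} = 108 \log(m + n + 1)$,
\begin{equation*}
    r \geq \frac{108 \log(m + n + 1)}{\varepsilon^{2}} \geq \frac{648 \log(m + n + 1)}{7 \varepsilon^{2}} \geq \frac{4 \log(m + n + 1)}{(\varepsilon/3)^{2}/2 - (\varepsilon/3)^{3}/3}.
\end{equation*}
Corollary~\ref{corollary:JL} then supplies $Q \in \Real^{k \times r}$ with orthonormal columns such that
\begin{equation*}
    \left| a_{i}^{T} b_{j} - \frac{k}{r} (Q^{T} a_{i})^{T} (Q^{T} b_{j}) \right| \leq \frac{\varepsilon}{3} \left( \norm{a_{i}}^{2} + \norm{b_{j}}^{2} - a_{i}^{T} b_{j} \right)
\end{equation*}
for all $i \in [m]$ and $j \in [n]$. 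Collecting the $a_{i}$ as the rows of $A \in \Real^{m \times k}$ and the $b_{j}$ as the rows of $B \in \Real^{n \times k}$, the matrix $Y := \tfrac{k}{r} (A Q)(B Q)^{T}$ satisfies $Y(i,j) = \tfrac{k}{r} (Q^{T} a_{i})^{T}(Q^{T} b_{j})$ and $\rank{Y} \leq r$.

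It remains to estimate the right-hand side entrywise. Since $U U^{T}$ is the orthogonal projection onto the column space of $X$ and $U$ has orthonormal columns, $\norm{u_{i}}^{2} = \norm{\Pi_{\mathrm{col}} e_{i}}^{2} \leq \tfrac{k}{m} \mu_{\mathrm{col}}$ by the definition of coherence, hence
\begin{equation*}
    \norm{a_{i}}^{2} = u_{i}^{T} \Sigma u_{i} \leq \sigma_{1} \norm{u_{i}}^{2} \leq \frac{k}{m} \mu_{\mathrm{col}} \Norm{X}{2};
\end{equation*}
symmetrically $\norm{b_{j}}^{2} \leq \tfrac{k}{n} \mu_{\mathrm{row}} \Norm{X}{2}$. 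Finally $- a_{i}^{T} b_{j} \leq |X(i,j)| \leq \Norm{X}{\max} = \tfrac{\gamma_{X}}{\sqrt{mn}} \Norm{X}{2}$. Adding the three bounds and substituting into the preceding display gives $\Norm{X - Y}{\max} \leq \tfrac{\varepsilon}{3} \big( \tfrac{k}{m} \mu_{\mathrm{col}} + \tfrac{k}{n} \mu_{\mathrm{row}} + \tfrac{\gamma_{X}}{\sqrt{mn}} \big) \Norm{X}{2}$, which is the claim.

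There is no deep obstacle here; the care goes into the constants and the role of the hypotheses. The reduction parameter must be $\varepsilon/3$ rather than $\varepsilon$ because the distortion in Corollary~\ref{corollary:JL} is measured against the inflated quantity $\norm{a_{i}}^{2} + \norm{b_{j}}^{2} - a_{i}^{T} b_{j}$, and this is exactly why the constant in $k_{0}$ must be chosen large enough — $108$ comfortably exceeds $648/7 \approx 92.6$ — to validate the corollary at parameter $\varepsilon/3$ uniformly over $\varepsilon \in (0,1)$. The assumption $\varepsilon > \sqrt{k_{0}/k}$ is used only to ensure $k_{0}/\varepsilon^{2} < k$, so that an integer $r$ with $k_{0}/\varepsilon^{2} \leq r < k$ exists; and the identity $\norm{u_{i}}^{2} = \norm{\Pi_{\mathrm{col}} e_{i}}^{2}$ is what converts the row norms of the singular vectors into the subspace coherences, keeping the bound below the ultimate bound \eqref{eq:ultimate_bound} precisely when the spikiness $\gamma_{X}$ is small.
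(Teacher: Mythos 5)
Your proof is correct and follows essentially the same route as the paper: split $\Sigma^{1/2}$ between the two SVD factors, apply Corollary~\ref{corollary:JL} at parameter $\varepsilon/3$ to the $m+n$ row vectors (with the same constant check that $108 \geq 648/7$), and convert row norms into coherences and the cross term into spikiness. The only nit is that you conclude $\rank{Y} \leq r$ while the theorem asserts equality; this is immediate since $A$, $B$ and $Q$ are all full rank, as the paper notes.
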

\begin{proof}
Let $X = U \Sigma V^T$ be the thin singular value decomposition of $X$ with $U \in \Real^{m \times k}$, $\Sigma \in \Real^{k \times k}$ and $V \in \Real^{n \times k}$. Consider matrices $\Tilde{U} = U \Sigma^{\frac{1}{2}}$ and $\Tilde{V} = V \Sigma^{\frac{1}{2}}$ and denote their rows as
\begin{equation*}
    \Tilde{U}^T = [ \Tilde{u}_1 \dots \Tilde{u}_m ], \quad \Tilde{V}^T = [ \Tilde{v}_1 \dots \Tilde{v}_n ].
\end{equation*}
We then apply Corollary~\ref{corollary:JL} with $\Tilde{\varepsilon} = \varepsilon/3$ to the set $\{ \Tilde{u}_1, \ldots \Tilde{u}_m, \Tilde{v}_1, \ldots, \Tilde{v}_n \} \subset \Real^k$. The choice of $r$ guarantees that
\begin{equation*}
    k \geq r \geq 108\frac{ \log(m+n+1)}{\varepsilon^2} \geq \frac{4 \log(m+n+1)}{\Tilde{\varepsilon}^2 / 2 - \Tilde{\varepsilon}^3 / 3},
\end{equation*}
so there is a matrix $Q \in \Real^{k \times r}$ with orthonormal columns such that for all $i \in [m]$ and $j \in [n]$
\begin{equation*}
    \left| \Tilde{u}_i^T \Tilde{v}_j - \frac{k}{r} (Q^T \Tilde{u}_i)^T (Q^T \Tilde{v}_j) \right| \leq \Tilde{\varepsilon} \left( \norm{\Tilde{u}_i}^2 + \norm{\Tilde{v}_j}^2 - \Tilde{u}_i^T \Tilde{v}_j \right).
\end{equation*}
Recall that $X(i,j) = \Tilde{u}_i^T \Tilde{v}_j$. Setting $Y = \frac{k}{r} (\Tilde{U} Q) (\Tilde{V} Q)^T$, we can rewrite the above inequality as
\begin{equation*}
    |X(i,j) - Y(i,j)| \leq \Tilde{\varepsilon} \left( \norm{\Tilde{u}_i}^2 + \norm{\Tilde{v}_j}^2 + \Norm{X}{\max} \right).
\end{equation*}
Next, we estimate the norms of the rows:
\begin{equation*}
    \norm{\Tilde{u}_i}^2 = \norm{\Sigma^{\frac{1}{2}} u_i}^2 \leq \Norm{\Sigma}{2} \cdot \norm{u_i}^2 = \Norm{X}{2} \cdot \norm{u_i}^2 \leq \frac{k}{m} \mu_{\mathrm{col}} \Norm{X}{2}.
\end{equation*}
Similarly, we have $\norm{\Tilde{v}_j}^2 \leq \frac{k}{n} \mu_{\mathrm{row}} \Norm{X}{2}$. Using the definition of spikiness we arrive at
\begin{equation*}
    \Norm{X - Y}{\max} \leq \frac{\varepsilon}{3} \left(\frac{k}{m} \mu_{\mathrm{col}} + \frac{k}{n} \mu_{\mathrm{row}} + \frac{\gamma_X}{\sqrt{mn}} \right) \Norm{X}{2}.
\end{equation*}
It remains to note that $\rank{Y} = r$ since $\Tilde{U}$, $\Tilde{V}$ and $Q$ are full rank.
\end{proof}

\begin{remark}
For $\varepsilon \leq \sqrt{k_0 / k}$, the actual rank $k$ and the approximation rank $r$ satisfy $r \geq k$, so that Theorem~\ref{theorem:our_max_appr} does not guarantee any compression.
\end{remark}

\begin{remark}
Since $\mu_{\mathrm{col}} \leq \frac{m}{k}$, $\mu_{\mathrm{row}} \leq \frac{n}{k}$ and $\gamma_X \leq \sqrt{mn}$, our Theorem~\ref{theorem:our_max_appr} guarantees that
\begin{equation*}
    \Norm{X - Y}{\max} \leq \varepsilon \Norm{X}{2}.
\end{equation*}
This is exactly the statement of Theorem~\ref{theorem:ut_max_appr}. In its original proof \cite[Theorem~1.0]{udell2019big}, the rows and columns of $\tilde{U}$ and $\tilde{V}$ were confused, which explains why the estimate of the rank in Theorem~\ref{theorem:ut_max_appr} depends on $2\min\{m,n\}$ rather than on $m + n$. All the other proofs in \cite{udell2019big} that follow the same line of reasoning are void of this inaccuracy.
\end{remark}

Theorem~\ref{theorem:our_max_appr} controls the approximation error better than Theorem~\ref{theorem:ut_max_appr} for matrices with low spikiness and incoherent column and row spaces. Consider two examples. First, the Hadamard matrix of size $n \times n$ has $\gamma_X = \sqrt{n}$ and $\mu_{\mathrm{col}} = \mu_{\mathrm{row}} = 1$ so that Theorem~\ref{theorem:our_max_appr} gives
\begin{equation*}
    \Norm{X - Y}{\max} \leq \frac{\varepsilon}{3} \left(2 + \frac{1}{\sqrt{n}}\right) \Norm{X}{2}.
\end{equation*}
The second example involves randomness. It was proved in \cite[Lemma~2.2]{candes2009exact} that if $L$ is a random $k$-dimensional subspace of $\Real^{m}$ then its coherence is with high probability bounded by
\begin{equation*}
    \mu_L \leq \tilde{C} \frac{\max\{ k, \log(m) \}}{k}.
\end{equation*}
Then if $X$ is designed to have random column and row spaces, Theorem~\ref{theorem:our_max_appr} says that
\begin{equation*}
    \Norm{X - Y}{\max} \leq \frac{\varepsilon}{3} \left( 1 + \frac{k}{m}\tilde{C} + \frac{k}{n}\tilde{C} \right) \Norm{X}{2}.
\end{equation*}
In both cases, even if $m$ and $n$ become arbitrarily large, the gain over Theorem~\ref{theorem:ut_max_appr} is within a constant factor. With the upcoming Theorem~\ref{theorem:our2_max_appr}, we achieve a more substantial improvement.

\section{Proof based on the Hanson--Wright inequality}
\label{sec:hw}

Under the hood, the Johnson--Lindenstrauss lemma is a probabilistic statement about the concentration of random variables. So is the Hanson--Wright inequality \cite{hanson1971bound, rudelson2013hanson} that establishes the concentration of a quadratic form in sub-Gaussian random variables around its mean. Recall that a real-valued random variable $\xi$ is called sub-Gaussian if
\begin{equation*}
    \Norm{\xi}{\psi_2} = \sup \set[\Big]{p^{-\frac{1}{2}} (\Expectation|\xi|^p)^{\frac{1}{p}}}{p \in \N} < \infty.
\end{equation*}

\begin{theorem}[{\cite[Theorem~1.1]{rudelson2013hanson}}]
\label{theorem:HW}
Let $A \in \Real^{n \times n}$. Consider a random vector $x \in \Real^{n}$ with independent components that satisfy $\Expectation [x(i)] = 0$ and $\Norm{x(i)}{\psi_2} \leq L$ for all $i \in [n]$. Then, for every $t \geq 0$, 
\begin{equation*}
    \Prob[\big]{| x^T A x - \Expectation [x^T A x] | \geq t} \leq 2 \exp\left(  -\frac{1}{c} \min\left\{ \frac{t^2}{L^4 \Norm{A}{F}^2}, \frac{t}{L^2 \Norm{A}{2}} \right\}  \right),
\end{equation*}
where $c > 0$ is an absolute constant.
\end{theorem}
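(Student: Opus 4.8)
The statement is the Hanson--Wright inequality of Rudelson and Vershynin, and the plan is to reproduce their moment-generating-function argument. First I would normalize: replacing $x$ by $x/L$ turns the hypothesis into $\Norm{x(i)}{\psi_2} \leq 1$ and leaves $\Norm{A}{F}$, $\Norm{A}{2}$ untouched, so it suffices to prove the bound for $L = 1$ and then substitute $t \mapsto t/L^2$. Since the components of $x$ are independent and centered, $\Expectation [x^T A x] = \sum_i A(i,i) \Expectation [x(i)^2]$, and therefore
\begin{equation*}
    x^T A x - \Expectation [x^T A x] = \sum_i A(i,i)\big(x(i)^2 - \Expectation [x(i)^2]\big) + \sum_{i \neq j} A(i,j) x(i) x(j) =: S_D + S_E.
\end{equation*}
I would bound the tails of $S_D$ and $S_E$ separately at level $t/2$ and combine them with a union bound, absorbing the stray factors into $c$.

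The diagonal part is elementary. The summands $A(i,i)\big(x(i)^2 - \Expectation[x(i)^2]\big)$ are independent and centered, and the square of a sub-Gaussian variable is sub-exponential, so each has $\psi_1$-norm at most an absolute multiple of $|A(i,i)|$. Bernstein's inequality for independent sub-exponential variables then gives
\begin{equation*}
    \Prob[\big]{|S_D| \geq t/2} \leq 2 \exp\left( -C \min\left\{ \frac{t^2}{\sum_i A(i,i)^2},\ \frac{t}{\max_i |A(i,i)|} \right\} \right),
\end{equation*}
which is of the required form because $\sum_i A(i,i)^2 \leq \Norm{A}{F}^2$ and $\max_i |A(i,i)| \leq \Norm{A}{2}$.

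The off-diagonal part $S_E$ carries the real content, and I would estimate $\Expectation \exp(\lambda S_E)$ before applying a Chernoff bound. The first step is decoupling: the standard decoupling inequality for a quadratic chaos produces an independent copy $x'$ of $x$ with $\Expectation \exp(\lambda S_E) \leq \Expectation \exp\big( C_1 \lambda \sum_{i,j} A(i,j) x(i) x'(j) \big)$, the double sum now over all pairs $(i,j)$. Conditioning on $x$, the exponent equals $C_1 \lambda \dotp{A^T x}{x'}$, a weighted sum of independent sub-Gaussians in $x'$, so the expectation over $x'$ is at most $\exp\big( C_2 \lambda^2\, x^T A A^T x \big)$, with no constraint on $\lambda$. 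What remains is to bound $\Expectation_x \exp\big( C_2 \lambda^2\, x^T A A^T x \big)$, the exponential moment of a positive semidefinite quadratic form in independent sub-Gaussian coordinates. In the model case of a Gaussian $x$, diagonalizing $A A^T$ with eigenvalues $\mu_1, \ldots, \mu_n \geq 0$ turns this into $\prod_k (1 - C_3 \lambda^2 \mu_k)^{-1/2}$, which is finite and at most $\exp\big( C \lambda^2 \Norm{A}{F}^2 \big)$ provided $|\lambda| \leq c_0 / \Norm{A}{2}$, using that $\sum_k \mu_k = \Norm{A}{F}^2$ and $\max_k \mu_k = \Norm{A}{2}^2$; the general sub-Gaussian case obeys the same estimate up to absolute constants. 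Feeding $\Expectation \exp(\lambda S_E) \leq \exp\big( C \lambda^2 \Norm{A}{F}^2 \big)$ into $\Prob{S_E \geq t/2} \leq \exp(-\lambda t / 2)\, \Expectation \exp(\lambda S_E)$ and optimizing over $|\lambda| \leq c_0 / \Norm{A}{2}$: the unconstrained minimizer $\lambda \sim t / \Norm{A}{F}^2$ is admissible precisely when $t \lesssim \Norm{A}{F}^2 / \Norm{A}{2}$, yielding the term $t^2 / \Norm{A}{F}^2$; in the complementary range one takes $\lambda = c_0 / \Norm{A}{2}$ at the boundary, yielding $t / \Norm{A}{2}$. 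The same argument applied to $-S_E$ handles the lower tail.

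The step I expect to be the main obstacle is the exponential-moment bound for the decoupled quadratic form: $\Expectation_x \exp\big( C_2 \lambda^2\, x^T A A^T x \big)$ genuinely diverges once $\lambda$ exceeds order $1 / \Norm{A}{2}$, so both the reduction to the Gaussian case and the eigenvalue computation must be done with care to locate this threshold --- and it is exactly this threshold that forces the two-regime structure $\min\{ t^2 / \Norm{A}{F}^2,\ t / \Norm{A}{2} \}$ in the final bound. Everything else --- the decoupling inequality, the conditioning on $x$, Bernstein for $S_D$, the union bound, and the closing rescaling $t \mapsto t / L^2$ --- is routine.
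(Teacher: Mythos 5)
This theorem is imported into the paper by citation to Rudelson and Vershynin with no proof given, and your sketch correctly reproduces the argument of that reference: rescaling to $L=1$, the diagonal/off-diagonal split with Bernstein for the diagonal, decoupling plus a moment-generating-function bound for the off-diagonal chaos, and the constrained Chernoff optimization over $|\lambda| \leq c_0/\Norm{A}{2}$ that produces the two-regime minimum. The one step you gloss --- bounding $\Expectation_x \exp\big(C_2\lambda^2\, x^T A A^T x\big)$ for general sub-Gaussian $x$ --- is not a direct comparison to the Gaussian case but goes through the identity $\exp\big(c\lambda^2\norm{A^Tx}^2\big) = \Expectation_g \exp\big(\lambda\sqrt{2c}\,\dotp{A^Tx}{g}\big)$ for an auxiliary Gaussian $g$, a swap of expectations, and only then the eigenvalue computation for the Gaussian chaos in $g$; since you explicitly flag this as the crux and identify the correct threshold $|\lambda| \lesssim 1/\Norm{A}{2}$, the sketch stands.
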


In the proof of Theorem~\ref{theorem:our_max_appr}, we used Corollary~\ref{corollary:JL} to control the deviation of inner products from their means; our current plan is to employ the Hanson--Wright inequality for the task. To do so, we need to set up a specific quadratic form and study its properties. We write $\otimes$ for the Kronecker product of matrices and $I_r$ for the $r \times r$ identity matrix.

\begin{lemma}
Let $u, v \in \Real^k$ and $r \in \N$. Then the matrix $A = I_{r} \otimes u v^T \in \Real^{kr \times kr}$ satisfies
\begin{equation*}
    \Norm{A}{2} = \norm{u} \cdot \norm{v}, \quad \Norm{A}{F} = \sqrt{r} \cdot \norm{u} \cdot \norm{v}.
\end{equation*}
\end{lemma}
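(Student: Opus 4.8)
The plan is to reduce everything to two standard facts about Kronecker products: the mixed-product property $(B_1 \otimes C_1)(B_2 \otimes C_2) = (B_1 B_2) \otimes (C_1 C_2)$ and the behaviour of trace and eigenvalues under $\otimes$. The shortest route is to exhibit an (almost) singular value decomposition of $A$ directly. Writing $u = \norm{u} \hat{u}$ and $v = \norm{v} \hat{v}$ with $\hat{u}, \hat{v}$ unit vectors (the degenerate cases $u = 0$ or $v = 0$ give $A = 0$ and are trivial), one has $u v^T = \norm{u}\norm{v}\, \hat{u}\hat{v}^T$, and hence
\begin{equation*}
    A = I_r \otimes uv^T = \norm{u}\norm{v} \bigl(I_r \otimes \hat{u}\bigr)\bigl(I_r \otimes \hat{v}\bigr)^T .
\end{equation*}

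First I would check that $P = I_r \otimes \hat{u} \in \Real^{kr \times r}$ and $R = I_r \otimes \hat{v} \in \Real^{kr \times r}$ have orthonormal columns: by the mixed-product property, $P^T P = I_r \otimes (\hat{u}^T \hat{u}) = I_r$, and likewise $R^T R = I_r$. Thus $A = P\,(\norm{u}\norm{v}\, I_r)\, R^T$ is a singular value decomposition of $A$ (up to padding $P$ and $R$ to square orthogonal matrices), so $A$ has exactly $r$ nonzero singular values, all equal to $\norm{u}\norm{v}$. This immediately yields $\Norm{A}{2} = \norm{u}\norm{v}$ and $\Norm{A}{F}^2 = r\,\norm{u}^2\norm{v}^2$, i.e. $\Norm{A}{F} = \sqrt{r}\,\norm{u}\norm{v}$.

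As an alternative that avoids invoking the SVD of $A$, one can compute the Gram matrix explicitly: $A^T A = (I_r \otimes vu^T)(I_r \otimes uv^T) = I_r \otimes (vu^Tuv^T) = \norm{u}^2 (I_r \otimes vv^T)$. Its eigenvalues are the products of the eigenvalue $1$ of $I_r$ (multiplicity $r$) with the eigenvalues $\norm{v}^2, 0, \ldots, 0$ of the rank-one matrix $vv^T$, so the spectrum of $A^T A$ is $\norm{u}^2\norm{v}^2$ with multiplicity $r$ and $0$ otherwise; taking the square root of the largest eigenvalue gives $\Norm{A}{2}$. For the Frobenius norm, $\Norm{A}{F}^2 = \mathrm{tr}(A^T A) = \norm{u}^2\, \mathrm{tr}(I_r \otimes vv^T) = \norm{u}^2 \cdot r \cdot \mathrm{tr}(vv^T) = r\,\norm{u}^2\norm{v}^2$, using $\mathrm{tr}(B \otimes C) = \mathrm{tr}(B)\,\mathrm{tr}(C)$.

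There is no real obstacle here: the statement is pure linear-algebra bookkeeping, and the only point requiring a little care is the degenerate case $u = 0$ or $v = 0$ (handled trivially) and making sure the Kronecker identities are applied with the matrices in the correct order. I would present the SVD-based argument as the main proof and perhaps mention the Gram-matrix computation in passing.
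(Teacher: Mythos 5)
Your proof is correct. The paper disposes of this lemma in one line by citing the general multiplicativity of the spectral and Frobenius norms under Kronecker products, $\Norm{B \otimes C}{2} = \Norm{B}{2}\,\Norm{C}{2}$ and $\Norm{B \otimes C}{F} = \Norm{B}{F}\,\Norm{C}{F}$, applied with $B = I_r$ and $C = uv^T$. You instead prove the special case from scratch by exhibiting the singular value decomposition $A = P(\norm{u}\norm{v}\,I_r)R^T$ with $P = I_r \otimes \hat{u}$, $R = I_r \otimes \hat{v}$, which is a clean and entirely valid self-contained argument (your verification that $P^TP = R^TR = I_r$ via the mixed-product property is exactly the point that needs checking, and your Gram-matrix alternative is equally sound). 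What your route buys is that it identifies the full singular spectrum of $A$ --- the value $\norm{u}\norm{v}$ with multiplicity $r$ --- from which both norms, and indeed any unitarily invariant norm of $A$, drop out at once; what the paper's route buys is brevity, at the cost of invoking a standard fact without proof. Your handling of the degenerate case $u = 0$ or $v = 0$ is appropriate, though it could also be absorbed by running the Gram-matrix computation unconditionally.
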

\begin{proof}
    The spectral and Frobenius norms of a Kronecker product of matrices are equal to the products of the respective norms of the matrices.
\end{proof}

\begin{lemma}
\label{lemma:special_hw}
Let $u, v \in \Real^k$, $r \in \N$ and $A = I_{r} \otimes u v^T \in \Real^{kr \times kr}$. Consider a random vector $x \in \Real^{kr}$ with independent components that satisfy $\Expectation [x(i)] = 0$ and $\Norm{x(i)}{\psi_2} \leq L$ with $L = r^{-\frac{1}{2}}$ for all $i \in [kr]$. Then, for every $\varepsilon \in [0,1]$, 
\begin{equation*}
    \Prob[\big]{| x^T A x - \Expectation [x^T A x] | \geq \varepsilon \cdot \norm{u} \cdot \norm{v} } \leq 2 \exp\left(  -\frac{r}{c} \varepsilon^2 \right).
\end{equation*}
\end{lemma}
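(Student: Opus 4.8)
The plan is to obtain Lemma~\ref{lemma:special_hw} as a direct specialization of the Hanson--Wright inequality (Theorem~\ref{theorem:HW}). First I would apply Theorem~\ref{theorem:HW} to the matrix $A = I_r \otimes u v^T \in \Real^{kr \times kr}$ and the random vector $x \in \Real^{kr}$, with sub-Gaussian parameter $L = r^{-\frac{1}{2}}$ and threshold $t = \varepsilon \cdot \norm{u} \cdot \norm{v} \geq 0$. The hypotheses on the components of $x$ (independence, zero mean, $\Norm{x(i)}{\psi_2} \leq L$) are exactly those assumed in the lemma, so Theorem~\ref{theorem:HW} applies verbatim and gives
\begin{equation*}
    \Prob[\big]{| x^T A x - \Expectation [x^T A x] | \geq \varepsilon \norm{u} \norm{v}} \leq 2 \exp\left( -\frac{1}{c} \min\left\{ \frac{t^2}{L^4 \Norm{A}{F}^2}, \frac{t}{L^2 \Norm{A}{2}} \right\} \right).
\end{equation*}

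Next I would substitute the norms of $A$ furnished by the preceding lemma, namely $\Norm{A}{2} = \norm{u} \norm{v}$ and $\Norm{A}{F} = \sqrt{r}\, \norm{u} \norm{v}$, together with $L^2 = r^{-1}$ and $L^4 = r^{-2}$. A one-line computation then yields
\begin{equation*}
    \frac{t^2}{L^4 \Norm{A}{F}^2} = \frac{\varepsilon^2 \norm{u}^2 \norm{v}^2}{r^{-2} \cdot r \norm{u}^2 \norm{v}^2} = r \varepsilon^2, \qquad \frac{t}{L^2 \Norm{A}{2}} = \frac{\varepsilon \norm{u} \norm{v}}{r^{-1} \norm{u} \norm{v}} = r \varepsilon,
\end{equation*}
so the quantity inside the exponential equals $\min\{ r\varepsilon^2, r\varepsilon \}$. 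Since $\varepsilon \in [0,1]$ we have $\varepsilon^2 \leq \varepsilon$, hence $\min\{ r\varepsilon^2, r\varepsilon \} = r\varepsilon^2$, and the right-hand side becomes $2 \exp(-r\varepsilon^2 / c)$, which is exactly the claimed bound.

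I do not expect a genuine obstacle here: the statement is essentially a change of variables inside Hanson--Wright, and the only points requiring attention are (i) checking that the moment hypotheses match those of Theorem~\ref{theorem:HW} (they do, by assumption), (ii) inserting the correct powers of $L = r^{-\frac{1}{2}}$ so that the factors of $r$ combine as above, and (iii) the elementary remark that the restriction $\varepsilon \leq 1$ is precisely what makes the sub-Gaussian (Frobenius) branch of the minimum the active one rather than the sub-exponential (spectral) branch. The degenerate case $\norm{u} \norm{v} = 0$, in which $A = 0$ and the quadratic form vanishes identically, plays no role in the intended application and may simply be excluded; the case $\varepsilon = 0$ is trivial because a probability never exceeds $1 < 2$.
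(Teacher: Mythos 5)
Your proposal is correct and follows exactly the paper's argument: specialize the Hanson--Wright inequality with $t = \varepsilon \norm{u}\norm{v}$, use $\Norm{A}{2} = \norm{u}\norm{v}$ and $\Norm{A}{F} = \sqrt{r}\,\norm{u}\norm{v}$ so that both branches of the minimum reduce to $r\varepsilon^2$ and $r\varepsilon$, and observe that $\varepsilon \in [0,1]$ makes the squared branch the active one. No substantive differences from the paper's proof.
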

\begin{proof}
The constant $L$ is chosen so that
\begin{equation*}
    L^4 \Norm{A}{F}^2 = \frac{1}{r} \norm{u}^2 \cdot \norm{v}^2, \quad L^2 \Norm{A}{2} = \frac{1}{r} \norm{u} \cdot \norm{v}.
\end{equation*}
Then Theorem~\ref{theorem:HW} guarantees that
\begin{equation*}
    \Prob[\big]{| x^T A x - \Expectation [x^T A x] | \geq t} \leq 2 \exp\left(  -\frac{r}{c} \min\left\{ \frac{t^2}{\norm{u}^2 \cdot \norm{v}^2}, \frac{t}{\norm{u} \cdot \norm{v}} \right\}  \right).
\end{equation*}
The squared term dominates whenever $t = \varepsilon \cdot \norm{u} \cdot \norm{v}$ with $\varepsilon \in [0,1]$.
\end{proof}

\begin{theorem}
\label{theorem:our2_max_appr}
Assume that $m, n, k \in \N$ satisfy, with an absolute constant $C > 0$,
\begin{equation*}
    \min\{m,n\} \geq k \geq k_0 = C \log(4mn).
\end{equation*}
Let $\varepsilon \in [\sqrt{k_0 / k}, 1]$ and assume that $r \in \N$ satisfies $k \geq r \geq k_0 / \varepsilon^2$. Then for every matrix $X \in \Real^{m \times n}$ of rank $k$ there exists a matrix $Y \in \Real^{m \times n}$ of rank $r$ such that
\begin{equation*}
    \Norm{X - Y}{\max} \leq \varepsilon \frac{k}{\sqrt{mn}} \sqrt{\mu_{\mathrm{col}} \mu_{\mathrm{row}}} \cdot \Norm{X}{2},
\end{equation*}
where $\mu_{\mathrm{col}}$ and $\mu_{\mathrm{row}}$ are the coherences of the column and row spaces of $X$.
\end{theorem}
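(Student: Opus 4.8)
The plan is to run the same factorization as in the proof of Theorem~\ref{theorem:our_max_appr}, but to replace the deterministic Johnson--Lindenstrauss embedding by a \emph{random} one and control it with Lemma~\ref{lemma:special_hw}. Write $X = U\Sigma V^T$ for the thin SVD, set $\tilde U = U\Sigma^{1/2}$, $\tilde V = V\Sigma^{1/2}$, and denote by $\tilde u_i$, $\tilde v_j$ the rows of $\tilde U$, $\tilde V$, so that $X(i,j) = \tilde u_i^T \tilde v_j$. Let $G \in \Real^{k\times r}$ have independent entries, each equal to $\pm r^{-1/2}$ with probability $\tfrac12$; then $x = \mathrm{vec}(G) \in \Real^{kr}$ has independent mean-zero components with $\Expectation[x(\ell)^2] = 1/r$ and $\Norm{x(\ell)}{\psi_2} = r^{-1/2}$. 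Define $Y = (\tilde U G)(\tilde V G)^T = \tilde U G G^T \tilde V^T$, so that $Y(i,j) = \tilde u_i^T G G^T \tilde v_j$. Writing $G G^T = \sum_{\ell=1}^r g_\ell g_\ell^T$ for the columns $g_\ell$ of $G$, one has $Y(i,j) = \sum_{\ell=1}^r g_\ell^T(\tilde u_i \tilde v_j^T) g_\ell = x^T A_{ij} x$ with $A_{ij} = I_r \otimes \tilde u_i \tilde v_j^T$. Since $\Expectation[x^T A_{ij} x] = \mathrm{tr}(A_{ij})\,\Expectation[x(1)^2] = r(\tilde u_i^T \tilde v_j)\cdot r^{-1} = X(i,j)$, the matrix $Y$ is an unbiased entrywise estimate of $X$.

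Now $Y(i,j) - X(i,j)$ is precisely a centered quadratic form of the kind handled by Lemma~\ref{lemma:special_hw}, applied with $u = \tilde u_i$ and $v = \tilde v_j$, so for the prescribed $\varepsilon$ and every pair $(i,j)$,
\[
\Prob[\big]{\,|Y(i,j) - X(i,j)| \geq \varepsilon\,\norm{\tilde u_i}\,\norm{\tilde v_j}\,} \leq 2\exp\!\left(-\tfrac{r}{c}\varepsilon^2\right).
\]
A union bound over the $mn$ pairs gives failure probability at most $2mn\exp(-r\varepsilon^2/c)$. The hypotheses $\varepsilon^2 \geq k_0/k$ and $r \geq k_0/\varepsilon^2$ force $r\varepsilon^2 \geq k_0 = C\log(4mn)$, so taking the absolute constant $C$ to be (say) the Hanson--Wright constant $c$ bounds this probability by $2mn\,(4mn)^{-1} = \tfrac12 < 1$. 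Hence there is a realization $G^\star$ for which $|Y(i,j) - X(i,j)| \leq \varepsilon\norm{\tilde u_i}\norm{\tilde v_j}$ holds simultaneously for all $i\in[m]$, $j\in[n]$. Enlarging $C$ slightly leaves enough room to also demand that $G^\star$ have full column rank $r$ (alternatively one runs the same argument with a Gaussian $G$, whose $\psi_2$ constant is absolute and absorbed into $C$, so that $G$ has full column rank almost surely); then $Y = (\tilde U G^\star)(\tilde V G^\star)^T$ has rank exactly $r$, since $\tilde U$ and $\tilde V$ have full column rank $k \geq r$.

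It remains to bound the right-hand side exactly as in the proof of Theorem~\ref{theorem:our_max_appr}:
\[
\norm{\tilde u_i}^2 \leq \Norm{\Sigma}{2}\norm{u_i}^2 = \Norm{X}{2}\norm{u_i}^2 \leq \tfrac{k}{m}\mu_{\mathrm{col}}\Norm{X}{2}, \qquad \norm{\tilde v_j}^2 \leq \tfrac{k}{n}\mu_{\mathrm{row}}\Norm{X}{2},
\]
whence $\norm{\tilde u_i}\,\norm{\tilde v_j} \leq \frac{k}{\sqrt{mn}}\sqrt{\mu_{\mathrm{col}}\mu_{\mathrm{row}}}\,\Norm{X}{2}$, and therefore $\Norm{X - Y}{\max} \leq \varepsilon\,\frac{k}{\sqrt{mn}}\sqrt{\mu_{\mathrm{col}}\mu_{\mathrm{row}}}\,\Norm{X}{2}$, as claimed.

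The substantive step — and the source of the improvement over Theorem~\ref{theorem:our_max_appr} — is recognizing that the randomized product $\tilde u_i^T G G^T \tilde v_j$ carries the block-diagonal Kronecker structure $I_r \otimes \tilde u_i\tilde v_j^T$, which is exactly what lets the Hanson--Wright inequality replace the Johnson--Lindenstrauss bound: the per-entry deviation is then controlled by the clean quantity $\varepsilon\norm{\tilde u_i}\norm{\tilde v_j}$, a geometric mean with no additive $\Norm{X}{\max}$ term, rather than by $\tilde\varepsilon(\norm{\tilde u_i}^2 + \norm{\tilde v_j}^2 - \tilde u_i^T\tilde v_j)$. The more routine point — but the one requiring care — is the calibration of the union bound, i.e.\ fixing $C$ large enough relative to $c$ that $r\varepsilon^2 \geq C\log(4mn)$ beats $c\log(mn)$ with a little slack, and, if one wants rank exactly $r$, enough slack to also land on a full-rank realization of $G$.
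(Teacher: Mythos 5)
Your proof is correct and follows essentially the same route as the paper: a random $k\times r$ matrix with i.i.d.\ mean-zero entries of scale $r^{-1/2}$, the Kronecker quadratic-form identity $\tilde u_i^T G G^T \tilde v_j = \mathrm{vec}(G)^T (I_r \otimes \tilde u_i \tilde v_j^T)\, \mathrm{vec}(G)$, Lemma~\ref{lemma:special_hw}, a union bound over the $mn$ entries, and the coherence bounds on $\norm{\tilde u_i}$, $\norm{\tilde v_j}$. Your Rademacher specialization even streamlines the paper's version slightly, since it makes $\Expectation[Y(i,j)] = X(i,j)$ exactly and dispenses with the $\Expectation|\xi|^2$ normalization that the paper absorbs into its constant $C$.
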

\begin{proof}
Take a non-zero centered sub-Gaussian random variable $\xi$ with $\Norm{\xi}{\psi_2} \leq 1$ and populate a matrix $Q \in \Real^{k \times r}$ with independent copies of $r^{-\frac{1}{2}}\xi$. Following the proof of Theorem~\ref{theorem:our_max_appr}, define the matrices $\Tilde{U} \in \Real^{m \times k}$ and $\Tilde{V} \in \Real^{n \times k}$ with rows $\{ \Tilde{u}_i^T \}_{i = 1}^{m}$ and $\{ \Tilde{v}_j^T \}_{j = 1}^{n}$. Then we have
\begin{equation*}
    \Tilde{u}_i^T \Tilde{v}_j = X(i,j), \quad \Expectation[\Tilde{u}_i^T Q Q^T \Tilde{v}_j] = \Expectation|\xi|^2 \cdot X(i,j)
\end{equation*}
for every $i \in [m]$ and $j \in [n]$. Next, we can rewrite $\Tilde{u}_i^T Q Q^T \Tilde{v}_j$ as a quadratic form
\begin{equation*}
    \Tilde{u}_i^T Q Q^T \Tilde{v}_j = \mathrm{vec}(Q)^T \cdot [I_{r} \otimes \Tilde{u}_i \Tilde{v}_j^T] \cdot \mathrm{vec}(Q),
\end{equation*}
where $\mathrm{vec}(Q) \in \Real^{kr}$ is obtained by stacking the columns of $Q$, and use Lemma~\ref{lemma:special_hw} to get
\begin{equation*}
    \Prob[\Big]{\Big| \Tilde{u}_i^T Q Q^T \Tilde{v}_j - \Expectation|\xi|^2 \cdot X(i,j) \Big| \geq \Tilde{\varepsilon} \cdot \norm{\Tilde{u}_i} \cdot \norm{\Tilde{v}_j} } \leq 2 \exp\left(  -\frac{r}{c} \Tilde{\varepsilon}^2 \right), \quad \Tilde{\varepsilon} \in [0,1].
\end{equation*}
Assume that $r \geq c \log(4mn) / \Tilde{\varepsilon}^2$. Then the union bound guarantees that
\begin{equation*}
    \Prob[\Big]{\Big| \Tilde{u}_i^T Q Q^T \Tilde{v}_j - \Expectation|\xi|^2 \cdot X(i,j) \Big| \leq \Tilde{\varepsilon} \cdot \norm{\Tilde{u}_i} \cdot \norm{\Tilde{v}_j} \text{ for all } i \in [m], j \in [n]} \geq \frac{1}{2}.
\end{equation*}
Thus, the desired $Q$ exist, and we can select one of full rank. Set $Y = \frac{1}{\Expectation|\xi|^2} (\Tilde{U} Q) (\Tilde{V} Q)^T$; then
\begin{equation*}
    |X(i,j) - Y(i,j)| \leq \frac{\Tilde{\varepsilon}}{\Expectation|\xi|^2} \norm{\Tilde{u}_i} \norm{\Tilde{v}_j} \leq \varepsilon \frac{k}{\sqrt{mn}} \sqrt{\mu_{\mathrm{col}} \mu_{\mathrm{row}}} \cdot \Norm{X}{2}
\end{equation*}
with $\varepsilon = \Tilde{\varepsilon} / \min\{1, \Expectation|\xi|^2\}$ for all $i \in [m]$ and $j \in [n]$. Choosing $C = c / \min\{1, \Expectation|\xi|^2\}^2$, we have
\begin{equation*}
    k \geq r \geq C\frac{ \log(4mn)}{\varepsilon^2} = \frac{ c \log(4mn)}{\varepsilon^2 \min\{1, \Expectation|\xi|^2\}^2} = c\frac{ \log(4mn)}{\Tilde{\varepsilon}^2}, \quad \Tilde{\varepsilon} \in [0,1]. \qedhere
\end{equation*}
\end{proof}

Let us return to the example from Section~\ref{sec:jl}. Consider a rank-$k$ matrix $X \in \Real^{n \times n}$ whose column and row spaces are drawn at random; under the assumptions of Theorem~\ref{theorem:our2_max_appr}, for sufficiently large $n$, the coherences $\mu_{\mathrm{col}}$ and $\mu_{\mathrm{row}}$ of $X$ are bounded by an absolute constant $\Tilde{C}$ and there exists a matrix $Y$ of rank $C \log(4n^2) / \varepsilon^2$ such that
\begin{equation*}
    \Norm{X - Y}{\max} \leq \varepsilon \Tilde{C} \frac{k}{n} \Norm{X}{2}.
\end{equation*}
Take a sequence of such $n \times n$ matrices $\{ X_{n} \}$ with $\Norm{X_n}{2} \leq n^{\frac{\delta}{2}}$ and $\rank{X_{n}} = \lceil n^{1 - \delta} \rceil$ for some $\delta \in (0, 1)$. Then Theorem~\ref{theorem:our2_max_appr} guarantees that $d_{r_n}(X_n) \to 0$ with $r_n = C \log(4n^2) / \varepsilon^2$ as $n \to \infty$. This behavior cannot be explained with Theorem~\ref{theorem:our_max_appr}.

Theorem~\ref{theorem:our2_max_appr} still has limitations, though. For a sequence of random orthogonal matrices $\{ X_n \}$, it guarantees that $d_{r_n}(X_n) \leq \varepsilon$ for large $n$. Meanwhile, for every \textit{fixed} rank $r$, the ultimate upper bound \eqref{eq:ultimate_bound} gives $d_r(X_n) \leq \Norm{X_n}{\max}$, and it follows that $d_r(X_n) \to 0$ as $n \to \infty$ since $\sqrt{n / \log(n)} \cdot \Norm{X_n}{\max} \to 2$ in probability \cite{jiang2005maxima}.
\section{Numerical experiments}
\label{sec:numerical}

\subsection{Known algorithms}
Even though the maximum-norm distance from a matrix to the set of rank-$r$ matrices is always attained, we cannot expect to compute the closest matrix with a reasonably efficient numerical algorithm since the problem is known to be NP-hard even for $r = 1$ (unless the sign pattern of the minimizer is known, in which case there is an algorithm of polynomial complexity that checks if the distance is smaller than a given constant \cite{gillis2019low}). Combinatorial arguments were used in \cite{morozov2023optimal} to design an optimization algorithm that converges to the best rank-one approximant and is based on repeated alternating minimization with different initial conditions.

The rank-$r$ approximation problem was addressed in \cite{zamarashkin2022best} with alternating minimization (but the algorithm was only shown to reach local minimizers) and in \cite{gillis2019low} with a heuristic coordinate descent approach. The algorithm from \cite{chierichetti2017algorithms} is based on the column subset selection problem and is proved to find a rank-$r$ matrix $Y$ such that $\Norm{X - Y}{\max} = \mathcal{O}(r) \cdot d_r(X)$ using $\mathcal{O}(r^r \log^r(m) \mathrm{poly}(mn))$ operations. In \cite{kyrillidis2018simple}, gradient descent was applied to a smoothed and regularized functional; with an optimal (but practically impossible to select) choice of parameters, the algorithm can find a rank-$r$ approximant that satisfies $\Norm{X - Y}{\max} \leq (1 + \varepsilon) \cdot d_r(X)$ after $\mathcal{O}(\varepsilon^{-2})$ iterations.

\subsection{Alternating projections}
In this article, we shall use the method of alternating projections to numerically compute low-rank approximations in the maximum norm. Following \cite[Section~7.5]{budzinskiy2023quasioptimal}, we introduce the closed maximum-norm ball of radius $\varepsilon$ centered at $X$,
\begin{equation*}
    \mathbb{B}_\varepsilon(X) = \set{Z \in \Real^{m \times n}}{\Norm{X - Z}{\max} \leq \varepsilon},
\end{equation*}
and the set of rank-$r$ matrices
\begin{equation*}
    \mathcal{M}_{r} = \set{Y \in \Real^{m \times n}}{\rank{Y} = r}.
\end{equation*}
The method of alternating projections consists in choosing the initial low-rank matrix $Y_0 \in \mathcal{M}_r$ and computing successive Euclidean projections onto the two sets:
\begin{equation*}
    Z_{k+1} = \Proj{\mathbb{B}_\varepsilon(X)}{Y_k}, \quad Y_{k+1} \in \Proj{\mathcal{M}_r}{Z_{k+1}}, \quad k \geq 0.
\end{equation*}
The crucial aspect making alternating projections viable for our problem is that the projections are required to minimize the Euclidean distance (i.e. the Frobenius norm) rather than the maximum-norm distance:
\begin{equation*}
    \Proj{\mathcal{A}}{Y} = \set[\Big]{\hat{A} \in \mathcal{A}}{\Norm{\hat{A} - Y}{F} = \inf_{A \in \mathcal{A}} \Norm{A - Y}{F}}, \quad \mathcal{A} \subset \Real^{m \times n}.
\end{equation*}
For the closed ball $\mathbb{B}_\varepsilon(X)$, the Euclidean projection is unique, can be computed as
\begin{equation*}
    \Proj{\mathbb{B}_\varepsilon(X)}{Y_k} = X + \Proj{\mathbb{B}_\varepsilon(0)}{Y_k - X},
\end{equation*}
and amounts to clipping the large elements of $Y_k - X$. The Euclidean projection onto $\mathcal{M}_r$ follows from the truncated singular value decomposition and can be non-unique if there are repeated singular values (any truncation can be chosen in this case). When the initial condition $Y_0$ is ``not too bad'', both sequences $\{ Z_k \}$ and $\{ Y_k\}$ approach the same limit in $\mathcal{M}_r \cap \mathbb{B}_\varepsilon(X)$. Combining the method of alternating projections with a binary search over feasible values of $\varepsilon$, we can estimate the maximum-norm distance $d_r(X)$ and provide a numerical upper bound $d_r(X) \leq \varepsilon_{+}$. To accelerate the computations, we use randomized singular value decomposition for rank truncation. Find further details in \cite[Section~7.5]{budzinskiy2023quasioptimal}.

We are going to consider four different classes of matrices in order to better understand how the maximum-norm distance $d_r(X)$ depends on the properties of the matrix $X$.

\subsection{Identity matrices}

The first class are identity matrices $\{ I_n \}$ of varying sizes. Some preliminary numerical results on the identity matrices were reported in \cite{budzinskiy2023quasioptimal}, and our goal here is to elaborate on them. The maximum and spectral norms of $I_n$ are independent of $n$,
\begin{equation*}
    \Norm{I_n}{max} = 1, \quad \Norm{I_n}{2} = 1,
\end{equation*}
hence the ultimate upper bound \eqref{eq:ultimate_bound} and Theorem~\ref{theorem:our2_max_appr} guarantee, respectively, that $d_r(I_n) \leq 1$ for every $r$ and $d_{r_n}(I_n) \leq \varepsilon$ for $r_n = C \log(4n^2) / \varepsilon^2$. In two series of experiments, we use the method of alternating projections to numerically estimate $d_r(I_n)$ for
\begin{itemize}
    \item fixed matrix size $n$ and varying approximation rank $r$,
    \item fixed approximation rank $r$ and varying matrix size $n$.
\end{itemize}
For every pair $(n,r)$, we form a random initial matrix $Y_0 \in \mathcal{M}_r$ as a product of two $n \times r$ random Gaussian matrices with variance $r^{-1}$ and repeat the experiment five times with different instances of $Y_0$. We plot the smallest errors achieved over the five experiments in Fig.~\ref{fig:identity}.

\begin{figure}[h!]
\centering
	\includegraphics[width=\textwidth]{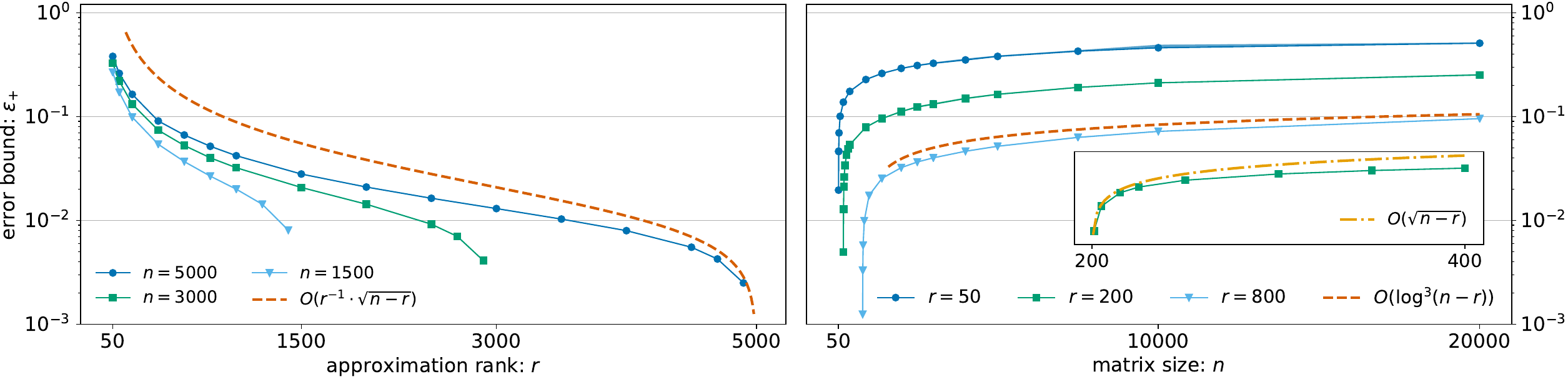}
\caption{Low-rank approximation errors in the maximum norm obtained with the method of alternating projections for the class of identity matrices: (left)~fixed matrix size $n$, (right)~fixed approximation rank $r$.}
\label{fig:identity}
\end{figure}

When $n$ is fixed and $r$ is small, we observe that the numerical estimate of $d_r(I_n)$ decays as $r^{-1}$ (same behavior was reported in \cite{budzinskiy2023quasioptimal} for $n = 1600$). As the approximation rank $r$ approaches $n$, the numerical error drops faster (to be expected) at a rate similar to $r^{-1} \sqrt{n - r}$. For moderate values of $r$ --- not too small and not too close to $n$ --- this numerical bound is in accordance with the $r^{-1/2}$ decay rate of Theorem~\ref{theorem:our2_max_appr}.

With fixed $r$, the error grows approximately as $\sqrt{n-r}$ for $n$ close to $r$ and slows down to the polylogarithmic rate for larger $n$ (we observed similar behavior for $r = 40$ in \cite{budzinskiy2023quasioptimal}, but with a different power of the logarithm). At the same time, Theorem~\ref{theorem:our2_max_appr} suggests that the approximation error grows as $\sqrt{\log(n)}$, which is slower than what we observe for the studied range of matrix sizes $n$.

\subsection{Random uniform matrices}

The second class are $n \times n$ random matrices $\{ U_n \}$ with independent entries distributed uniformly in the interval $(-1, 1)$. Their maximum and spectral norms with high probability satisfy the following \cite[Section~2.3]{tao2012topics}:
\begin{equation*}
    \Norm{U_n}{max} \approx 1, \quad \Norm{U_n}{2} = \mathcal{O}(\sqrt{n}).
\end{equation*}
This means that the upper bound of Theorem~\ref{theorem:our2_max_appr} for random uniform matrices is about $\sqrt{n}$ times larger than for identity matrices. We carry out two similar series of experiments as with the identity matrices, repeating the computations ten times with different instances of the initial approximation and plotting the median together with the 10\% and 25\% percentiles (see Fig.~\ref{fig:uniform}).

\begin{figure}[ht!]
\centering
	\includegraphics[width=\textwidth]{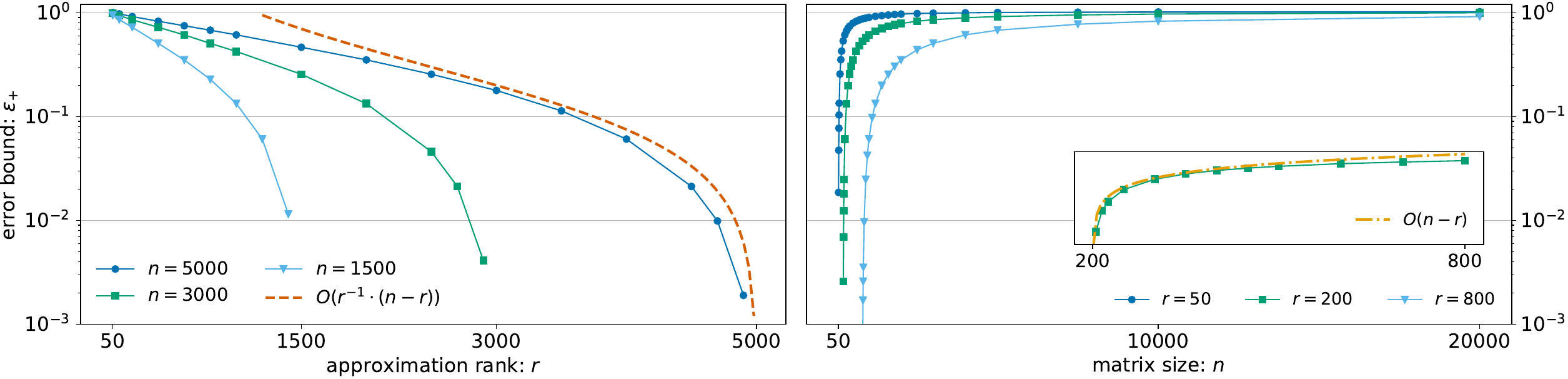}
\caption{Low-rank approximation errors in the maximum norm obtained with the method of alternating projections for the class of random uniform matrices: (left)~fixed matrix size $n$, (right)~fixed approximation rank $r$.}
\label{fig:uniform}
\end{figure}

For fixed matrix size $n$, our results show that the errors decrease approximately as $r^{-1} (n - r)$ when the approximation rank $r$ approaches $n$; this rate is higher than for identity matrices by a factor of $\sqrt{n - r}$. In addition, we do not see the $r^{-1}$-decay regime for small values of $r$, and the corresponding errors are close to one.

When we let $n$ grow and keep $r$ fixed, we observe that the errors increase much faster than for identity matrices and converge to the ultimate upper bound \eqref{eq:ultimate_bound} rapidly. The growth rate of the errors for small $n$ is similar to $n-r$, which confirms the speedup factor of $\sqrt{n - r}$ compared to identity matrices.

\subsection{Banded random uniform matrices}

The third class are banded random uniform matrices $\{ U_{n,b} \}$. An $n \times n$ matrix $U_{n,b}$ with the band-width parameter $b$, $1 \leq b \leq n$, has $2b-1$ non-zero central diagonals that consist of independent random entries distributed uniformly in $(-1,1)$, while all the remaining entries of $U_{n,b}$ are zero. The theoretical results on banded random matrices \cite{bandeira2016sharp} relate their expected spectral norm with $\sqrt{b}$. In Fig.~\ref{fig:banded_uniform}, we plot the median with the 10\% and 25\% percentiles of the spectral norms $\Norm{U_{n,b}}{2}$ for $n = 2000$ and varying $b$. The numerical estimate suggests that
\begin{equation*}
    \Norm{U_{n,b}}{max} \approx 1, \quad \Norm{U_{n,b}}{2} = \mathcal{O}(\sqrt{b}).
\end{equation*}

\begin{figure}[h!]
\centering
	\includegraphics[width=\textwidth]{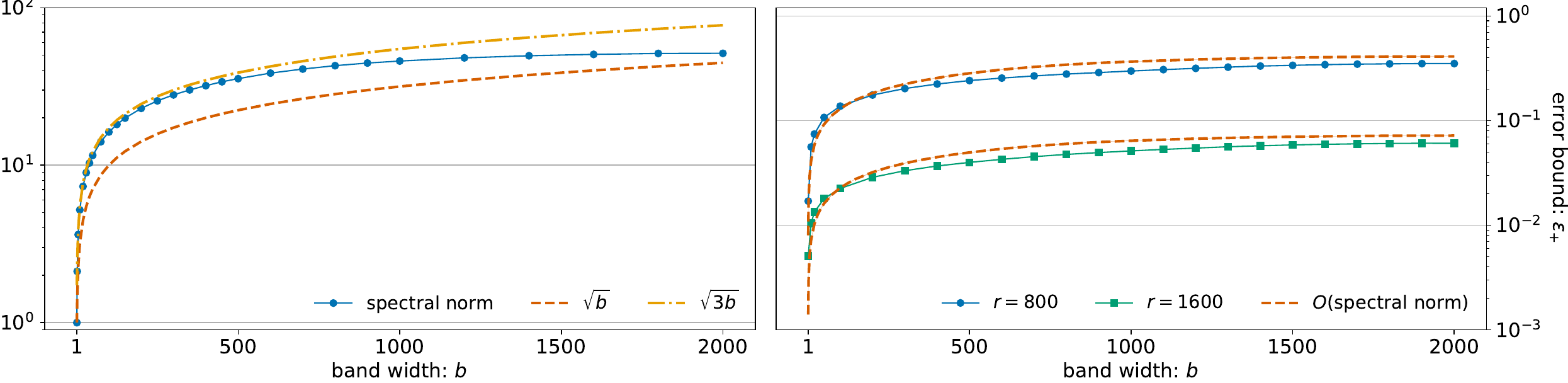}
\caption{Properties of $2000 \times 2000$ banded random uniform matrices: (left)~spectral norm as a function of band width $b$; (right)~low-rank approximation errors in the maximum norm obtained with the method of alternating projections for fixed approximation rank $r$.}
\label{fig:banded_uniform}
\end{figure}

By changing the band-width parameter $b$, we can control the spectral norm of the banded random matrix. This property allows us to investigate how the distance $d_r(U_{n,b})$ depends on $\Norm{U_{n,b}}{2}$ and verify if the relation is linear as suggested by Theorem~\ref{theorem:our2_max_appr}. We perform the experiments in the same settings as used with the random uniform matrices. The numerical results in Fig.~\ref{fig:banded_uniform} show that the growth of the distance is in perfect alignment with the spectral norm $\Norm{U_{n,b}}{2}$, confirming the way in which the spectral norm enters the estimate in Theorem~\ref{theorem:our2_max_appr} at least for a specific class of matrices.

\subsection{Products of random matrices with orthonormal columns}

The fourth class are matrices with incoherent low-dimensional column and row spaces $\{ P_{n,k} \}$. We form $n \times n$ matrices $P_{n,k}$ of rank $k$, $1 \leq k \leq n$, as products of two $n \times k$ random matrices with orthonormal columns that are drawn from the uniform distribution on the Stiefel manifold. Such low-rank factors can be computed by orthogonalizing $n \times k$ standard Gaussian random matrices. Upon normalizing the entries of $P_{n,k}$, we obtain from the numerical simulations (see Fig.~\ref{fig:lr_randort}) that the norms of these matrices behave as
\begin{equation*}
    \Norm{P_{n,k}}{max} = 1, \quad \Norm{P_{n,k}}{2} = \mathcal{O}(k^{-0.55}).
\end{equation*}

\begin{figure}[h!]
\centering
	\includegraphics[width=\textwidth]{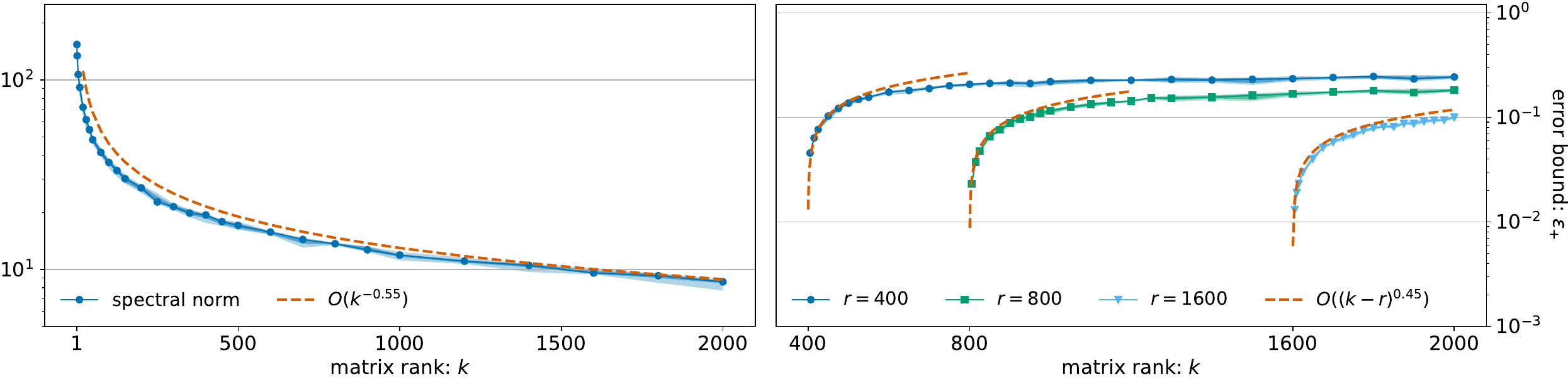}
\caption{Properties of $2000 \times 2000$ matrices formed as products of $2000 \times k$ random matrices with orthonormal columns and normalized to unit maximum norm: (left)~spectral norm as a function of rank $k$; (right)~low-rank approximation errors in the maximum norm obtained with the method of alternating projections for fixed approximation rank $r$.}
\label{fig:lr_randort}
\end{figure}

The numerical experiments in Fig.~\ref{fig:lr_randort} suggest that the distance $d_r(P_{n,k})$ grows approximately as $(k-r)^{0.45}$ when the approximation rank $r$ is close to the actual rank $k$ and then stabilizes as $k$ approaches $n$, its maximum possible value. At the same time, the upper bound of Theorem~\ref{theorem:our2_max_appr} scales with $k$ as $k \Norm{P_{n,k}}{2} = \mathcal{O}(k^{0.45})$ --- the same power law as we observe in the experiments.
\section{Discussion}

We can formulate three implications from the results of our numerical experiments. First, they illustrate how the distance $d_r(X)$ depends on $\rank{X}$ and confirm the corresponding scaling in Theorem~\ref{theorem:our2_max_appr}. The proof of such dependency is a novel contribution of our article as neither Theorem~\ref{theorem:spd_max_appr} nor Theorem~\ref{theorem:ut_max_appr} describe it.

Second, the numerical results serve as evidence that, for general matrices $X$, the distance $d_r(X)$ depends on the spectral norm $\Norm{X}{2}$ (as suggested by Theorems~\ref{theorem:ut_max_appr} and \ref{theorem:our2_max_appr}) rather than on the maximum norm $\Norm{X}{max}$ (as in the symmetric positive semidefinite case of Theorem~\ref{theorem:spd_max_appr}). Therefore, it is likely that the need to replace $\Norm{X}{max}$ with $\Norm{X}{2}$ in the process of generalizing Theorem~\ref{theorem:spd_max_appr} to non-symmetric matrices is not a technical limitation of the proof.

Third, our results reveal a phase transition. When the approximation rank $r$ and the actual $\rank{X}$ are close, the distance $d_r(X)$ exhibits power-law scaling with respect to parameters. Whereas when $r$ is small compared to $\rank{X}$, the rate of change switches to the polylogarithmic regime, until $d_r(X)$ eventually approaches the ultimate upper bound \eqref{eq:ultimate_bound}. 

It would be interesting to see our numerical experiments repeated --- and extended --- with different algorithms, especially with those that have stronger optimality guarantees.

% \clearpage
\printbibliography

\end{document}